%
\documentclass [11pt]{amsart}
\usepackage{amsmath,amssymb}
 \usepackage{amsthm, amsfonts}
 \usepackage{enumerate}
 \usepackage{amssymb,amsmath,amsthm, amsfonts}
 \usepackage{amsmath,amssymb}
 \usepackage{amsthm, amsfonts}
 \usepackage{enumerate}
 \usepackage{amssymb,amsmath,amsthm, amsfonts}
 \usepackage{cite}
\usepackage[pagewise]{lineno}
\newtheorem{theorem}{Theorem}[section]
\newtheorem{proposition}[theorem]{Proposition}
\newtheorem{lemma}[theorem]{Lemma}
\newtheorem{corollary}[theorem]{Corollary}
\theoremstyle{definition}
\newtheorem{definition}[theorem]{Definition}
\newtheorem{example}[theorem]{Example}

\theoremstyle{remark}

\numberwithin{equation}{section}

\usepackage{float}
\usepackage{graphicx}
\usepackage{caption}
\usepackage{subcaption}
\usepackage{hyperref}
\begin{document}

\title [{{Graded modules with Noetherian graded second spectrum}}]{Graded modules with Noetherian graded second spectrum}

 \author[{{S. Salam and K. Al-Zoubi,  }}]{\textit{ Saif Salam and Khaldoun Al-Zoubi}*}

\address
{\textit{ Saif Salam, Department of Mathematics and
Statistics, Jordan University of Science and Technology, P.O.Box
3030, Irbid 22110, Jordan.}}
\bigskip
{\email{\textit{smsalam19@sci.just.edu.jo}}}

\address
{\textit{Khaldoun Al-Zoubi, Department of Mathematics and
Statistics, Jordan University of Science and Technology, P.O.Box
3030, Irbid 22110, Jordan.}}
\bigskip
{\email{\textit{kfzoubi@just.edu.jo}}}

 \subjclass[2010]{13A02, 16W50.}

\date{}
\begin{abstract}
Let $R$ be a $G$ graded commutative ring and $M$ be a $G$-graded $R$-module. The set of all graded second submodules of $M$ is denoted by $Spec_G^s(M)$ and it is called the graded second spectrum of $M$. In this paper, we discuss graded rings with Noetherian graded prime spectrum and obtain some conclusions. In addition, we introduce the notion of the graded Zariski socle of graded submodules and explore their properties. Using these conclusions and properties, we also investigate $Spec_G^s(M)$ with the Zariski topology from the viewpoint of being a Noetherian space and give some related outcomes.
\end{abstract}

\keywords{ . \\
$*$ Corresponding author}
 \maketitle


 \section{Introduction and Preliminaries }
Let $G$ be an abelian group with identity $e$ and $R$ be a commutative ring with unity $1$. We say that $R$ is a $G$-graded ring if there exist a family $\{R_g\}_{g\in G}$ of additive subgroups of $R$ such that $R=\underset{g\in G}{\oplus} {R_g}$ and $R_g R_h\subseteq R_{gh}$ for all $g,h\in G$. The elements of the set $h(R)=\underset{g\in G}{\bigcup}R_g$ are called the homogeneous elements of $R$. In addition, the non-zero elements of $R_g$ are called homogeneous elements of degree $g$. Every element $x\in R$ can be written uniqely as $\underset{g\in G}{\sum}x_g$, where $x_g\in R_g$ and $x_g=0$ for all but finitely many $g$.  It is easy to see that if $R=\underset{g\in G}{\oplus} R_g$ is a $G$-graded ring, then $1\in R_e$ and $R_e$ is a subring of $R$, see \cite{nastasescu2004methods}. An ideal $I$ of a $G$-graded ring $R=\underset{g\in G}{\oplus}R_g$ is said to be a $G$-graded ideal of $R$, denoted by $I\lhd_G R$, if $I=\underset{g\in G}{\oplus}(I\cap R_g)$. The graded radical of the graded ideal $I$ is the set of all $a=\underset{g\in G}{\sum}a_g\in R$ such that for each $g\in G$ there exists $n_g > 0$ with $a_g ^{n_g}\in I$. By $Gr(I)$ (resp. $\sqrt{I}$) we mean the graded radical (resp. the radical) of $I$. Note that if $r\in h(R)$, then $r\in Gr(I)$ if and only if $r\in \sqrt{I}$. If $I=Gr(I)$, then we say that $I$ is a graded radical ideal of $R$, see \cite{refai2000graded}. The graded prime spectrum of $R$, given by $Spec_G(R)$, is defined to be the set of all graded prime ideals of $R$. For each graded ideal $J$ of $R$, define $V_G^R(J)$ as the set $\{p\in Spec_G(R)\,\mid\, J\subseteq p\}$. Then the collection $\{V_G^R(J)\,\mid\, J\lhd_G R\}$ satisfies the axioms for closed sets of a topology on $Spec_G(R)$. The resulting topology is called the Zariski topology on $Spec_G(R)$ (see, for example, \cite{refai2000graded,ozkiirisci2013graded,refai2001properties,al2020additional}).

Let $R$ be a $G$-graded ring and $M$ be a left $R$-module. We say that $M$ is a $G$-graded $R$-module if $M=\underset{g\in G}{\oplus}{M_g}$ and $R_g M_h\subseteq M_{gh}$ for all $g, h\in G$, where every $M_g$ is an additive subgroup of $M$. The elements of the set $h(M)=\underset{g\in G}{\bigcup} M_g$ are called the homogeneous elements of $M$. Also, the non-zero elements of $M_g$ are called homogeneous elements of degree $g$. If $t\in M$, then $t$ can be uniquely represented by $\underset{g\in G}{\sum}{t_g}$, where $t_g\in M_g$ and $t_g=0$ for all but finitely many $g$.  A submodule $N$ of a $G$-graded $R$-module $M=\underset{g\in G}{\oplus} M_g$ is said to be a $G$-graded submodule of $M$ if $N=\underset{g\in G}{\oplus}(N\cap M_g)$. By $N\leq _G M$, we mean that $N$ is a $G$-graded submodule of $M$. Let $M$ be a $G$-graded $R$-module, $I\lhd_G R$ and $N\leq_G M$. Then $Ann_R(N)=\{r\in R\,\mid\, rN=\{0\}\}\lhd_G R$ and $Ann_M(I)=\{m\in M\,\mid\, Im=\{0\}\}\leq_G M$, , see \cite{nastasescu2004methods}.

Let $M$ be a $G$-graded $R$-module. A non-zero graded submodule $S$ of $M$ is called graded second if $rS=S$ or $rS=0$ for every $r\in h(R)$. In this case, $Ann_R(S)$ is a graded prime ideal of $R$. The set of all graded second submodules of $M$ is denoted by $Spec_G^s(M)$ and it is known as the graded second spectrum of $M$. If $Spec_G^s(M)=\emptyset$, then we say that $M$ is a $G$-graded secondless $R$-module. The graded second radical (or graded second socle), $soc_G(N)$, of a $G$-graded submodule $N$ of $M$ is defined as the sum of all graded second submodules of $M$ contained in $N$. When $N$ does not contain graded second submodules, we set $soc_G(N)=\{0\}$. For more information about the graded second submodules and the graded second socle of graded submodules of graded modules (see, for example, \cite{ansari2012graded,cceken2015graded,salam2022zariski}).

Let $M$ be a $G$-graded $R$-module and  let $\Omega ^ {s*} (M)=\{V_G^{s*} (N)\, \mid \, N\leq_G M\}$ where $V_G^{s*}(N)=\{S\in Spec_G^s(M)\, \mid \, S\subseteq N\}$ for any $N\leq _G M$. We say that $M$ is a $G$-cotop module if the collection $\Omega^{s*}(M)$ is closed under finite union. When this is the case, $\Omega^{s*}(M)$ induces a topology on $Spec_G^s (M)$ having $\Omega^{s*}(M)$ as the collection of all closed sets and the generated topology is called the quasi-Zariski topology on $Spec_G^s(M)$. Unlike $\Omega^{s*}(M)$, $\Omega(M)=\{V_G^s(N)\, \mid \, N\leq_G M\}$ where $V_G^s(N)=\{S\in Spec_G^s(M)\, \mid \, Ann_R(N)\subseteq Ann_R(S)\}$ for any $N\leq_G M$ always satisfies the axioms for closed sets of a topology on $Spec_G^s(M)$. This topology is called the Zariski topology on $Spec_G^s(M)$. For a $G$-graded $R$-module $M$, the map $\phi:Spec_G^s(M)\rightarrow Spec_G(R/Ann_R(M))$ defined by $S\rightarrow Ann_R(S)/Ann_R(M)$ is called the natural map of $Spec_G^s(M)$. For more details concerning the topologies on $Spec_G^s(M)$ and the natural map of  $Spec_G^s(M)$, one can look in \cite{salam2022zariski}.

Let $M$ be a $G$-graded $R$-module. Then $M$ is said to be graded Noetherian (resp. graded Artinian) if it satisfies the ascending (resp. descending) chain condition for the graded submodules. The $G$-graded ring $R$ is said to be graded Noetherian (resp. graded Artinian) if it is graded Noetherian (resp. graded Artinian) as $G$-graded $R$-module, see \cite{nastasescu2004methods}. A topological space $X$ is Noetherian provided that the open (resp. closed) subsets of $X$ satisfy the ascending (resp. descending) chain condition, or the maximal (resp. minimal) condition, see \cite{atiyah1969introduction,bourbakialgebre}.

We start this work by studying graded rings with Noetherian graded prime spectrum and provide some related results which are important in the last section. For example, we show that every $G$-graded Noetherian ring $R$ has Noetherian graded prime spectrum which implies that every graded radical ideal $I$ of $R$ is the intersection of a finite number of minimal graded prime divisors of it (Proposition \ref{Proposition 2.2} and Theorem \ref{Theorem 2.7}). The notion of $RFG_g$-ideals will be introduced and some properties of them will be given. In Section 2 of this paper, among other important results, we prove that $Spec_G(R)$ for a $G$-graded ring $R$ is a Noetherian topological space if and only if every graded prime ideal of $R$ is an $RFG_g$-ideal (Corollary \ref{Corollary 2.13}). Let $M$ be a $G$-graded $R$-module. In Section 3, we say that $M$ is a graded secondful module if the natural map $\phi$ of $Spec_G^s(M)$ is surjective. The surjectivity of $\phi$ plays important role in this study. So we investigate some properties of the graded secondful modules. Then we define the graded Zariski socle, $Z.soc_G(N)$, of a graded submodule $N$ of $M$ to be the sum of all members of $V_G^s(N)$ if $V_G^s(N)\neq \emptyset$ and we put $Z.soc_G(N)=\{0\}$ otherwise. We provide some relationships between $V_G^{s*}(N)$ and $V_G^s(N)$ to obtain some situations where the graded second socle and the graded Zariski socle coincide (Lemma \ref{Lemma 3.5} and Proposition \ref{Proposition 3.6}). We also list some important properties of the graded Zariski socle of graded submodules (Proposition \ref{Proposition 3.8}). In Section 4, we study graded modules with Noetherian graded second spectrum and state some related observations. The properties of the graded Zariski socle of graded submodules are essential in this section. For example, we show that the graded second spectrum of a graded module is a Noetherian space if and only if the descinding chain condition for graded Zariski socle submodules hold (Theorem \ref{Theorem 4.1}). In addition, we show that the surjective relationship between $Spec_G^s(M)$ and $Spec_G(R/Ann_R(M))$ for a $G$-graded secondful $R$-module $M$ yields the characterization that $Spec_G^s(M)$ is a Noetherian space exactly if $Spec_G(R/Ann_R(M))$ is Noetherian (Theorem \ref{Theorem 4.5}). After this, we add some conditions on the graded secondful modules and prove that, under these conditions, every graded Zariski socle submodule is the sum of a finite number of graded second submodules (Theorem \ref{theorem 4.8}(1)). Next, we give the concept of $RFG^*_g$-submodules of graded modules. Using the results of Section 2 and Section 3, we prove the equivalence that a graded faithful secondful module has Noetherian graded second spectrum if and only if every graded second submodule is an $RFG^*_g$-submodule (Corollary \ref{corollary 4.12}(1)).

Throughout this paper, $G$ is an abelian group with identity $e$ and all rings are commutative with unity $1$. For a $G$-graded $R$-module $M$ and a graded ideal $I$ of $R$ containing $Ann_R(M)$, $I/Ann_R(M)$ and $R/Ann_R(M)$ will be expressed by $\overline{I}$ and $\overline{R}$, respectively.
\section{Graded rings with graded prime spectrum}
In this section, we give a few conditions under which the Zariski topology on $Spec_G(R)$ for a $G$-graded ring $R$ is a Noetherian space and obtain some related results that are needed in the last section.

Let $R$ be a $G$-graded ring and $Y$ be a subset of $Spec_G(R)$. Then the closure of $Y$ in $Spec_G(R)$ will be expressed by $Cl(Y)$. Also, the intersection of all members of $Y$ will be denoted by $\xi(Y)$. If $Y=\emptyset$, we write $\xi(Y)=R$. It is clear that $\xi(Y_2)\subseteq \xi(Y_1)$ for any $Y_1\subseteq Y_2\subseteq Spec_G(R)$.
\begin{lemma}\label{Lemma 2.1}
Let $R$ be a $G$-graded ring. Then we have the following:
\begin{enumerate}
\item If $Y\subseteq Spec_G(R)$, then $Cl(Y)=V_G^R(\xi(Y))$.
\item $\xi(V_G^R(I))=Gr(I)$ for each graded ideal $I$ of $R$.
\item $Gr(I_1)=Gr(I_2)\Leftrightarrow V_G^R(I_1)=V_G^R(I_2)$ for each graded ideals $I_1$ and $I_2$ of $R$.
\item If $R$ has Noetherian graded prime spectrum, then so does $R/I$ for any graded ideal $I$ of $R$.
\end{enumerate}
\end{lemma}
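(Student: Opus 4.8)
The plan is to establish the four parts essentially in order, reusing earlier parts for the later ones. For part (1), I would show the two inclusions. Since $\xi(Y)\subseteq p$ for every $p\in Y$, each such $p$ lies in $V_G^R(\xi(Y))$, so $Y\subseteq V_G^R(\xi(Y))$, and the latter is closed, giving $Cl(Y)\subseteq V_G^R(\xi(Y))$. Conversely, any closed set containing $Y$ has the form $V_G^R(J)$ with $J\subseteq p$ for all $p\in Y$, hence $J\subseteq\xi(Y)$, hence $V_G^R(\xi(Y))\subseteq V_G^R(J)$; intersecting over all such closed sets yields $V_G^R(\xi(Y))\subseteq Cl(Y)$. (The edge case $Y=\emptyset$ is handled by the convention $\xi(\emptyset)=R$ and $V_G^R(R)=\emptyset$.)

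For part (2), I would use the standard fact that the graded radical of a graded ideal $I$ equals the intersection of all graded prime ideals containing $I$; this is the graded analogue of the classical result and is available from the references on graded radicals. Thus $\xi(V_G^R(I))=\bigcap_{p\in V_G^R(I)}p=Gr(I)$. For part (3), the direction ($\Rightarrow$) follows from part (2): if $Gr(I_1)=Gr(I_2)$ then $\xi(V_G^R(I_1))=\xi(V_G^R(I_2))$; applying $V_G^R$ and using part (1) (note $V_G^R(J)=Cl(V_G^R(J))$ since closed sets are their own closure, and $Cl(V_G^R(J))=V_G^R(\xi(V_G^R(J)))=V_G^R(Gr(J))$) gives $V_G^R(I_1)=V_G^R(Gr(I_1))=V_G^R(Gr(I_2))=V_G^R(I_2)$. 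For ($\Leftarrow$), apply $\xi$ to $V_G^R(I_1)=V_G^R(I_2)$ and invoke part (2) directly.

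For part (4), I would use the standard order-preserving correspondence between graded ideals of $R/I$ and graded ideals of $R$ containing $I$, which restricts to a homeomorphism between $Spec_G(R/I)$ and the closed subspace $V_G^R(I)$ of $Spec_G(R)$ (with its subspace topology). A Noetherian topological space has Noetherian subspaces, and a space homeomorphic to a Noetherian space is Noetherian; hence $Spec_G(R/I)$ is Noetherian. The main obstacle, such as it is, is part (4): one must be slightly careful to confirm that the bijection $Spec_G(R/I)\to V_G^R(I)$ is genuinely a homeomorphism for the Zariski topologies — i.e. that closed sets correspond to closed sets under the correspondence $\bar J\mapsto J$ — but this is routine once the graded correspondence theorem is invoked, and parts (1)--(3) are purely formal manipulations with $\xi$, $V_G^R$, and $Gr$.
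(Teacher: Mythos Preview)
Your proposal is correct and follows essentially the same approach as the paper for all four parts. The only minor difference is in part (4): the paper argues directly by lifting a descending chain of closed sets $V_G^{R/I}(J_i/I)$ in $Spec_G(R/I)$ to a descending chain $V_G^R(J_i)$ in $Spec_G(R)$ via the ideal correspondence and then stabilizing, whereas you invoke the homeomorphism $Spec_G(R/I)\cong V_G^R(I)$ together with the general fact that subspaces of Noetherian spaces are Noetherian --- the same underlying idea, packaged slightly more abstractly.
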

\begin{proof}
(1) Clearly, $Y\subseteq V_G^R(\xi(Y))$. For the reverse inclusion, let $V_G^R(I)$ be any closed subset of $Spec_G(R)$ containing $Y$, where $I\lhd_G R$. It is enough to show that $V_G^R(\xi(Y))\subseteq V_G^R(I)$. So let $p\in V_G^R(\xi(Y))$. Then $\xi(Y)\subseteq p$. Note that for any $p^\prime\in Y$, we have $I\subseteq p^\prime$ and thus $I\subseteq \underset{p^\prime\in Y}{\bigcap}p^\prime= \xi(Y)\subseteq p$. This implies that $p\in V_G^R(I)$. Therefore $V_G^R(\xi(Y))$ is the smallest closed set containing $Y$ and hence $Cl(Y)=V_G^R(\xi(Y))$. \\
(2) follows from \cite[Proposition 2.5]{refai2000graded}.\\
(3) Suppose that $Gr(I_1)=Gr(I_2)$. By (2), we have $\xi(V_G^R(I_1))=\xi(V_G^R(I_2))$ and this implies that $V_G^R(I_1)=Cl(V_G^R(I_1))=V_G^R(\xi(V_G^R(I_1)))=V_G^R(\xi(V_G^R(I_2)))=Cl(V_G^R(I_2))=V_G^R(I_2)$ by (1). The converse is clear by using (2) again.  \\
(4) Let $V_G^{R/I}(J_1/I)\supseteq V_G^{R/I}(J_2/I)\supseteq ...$ be a descending chain of closed sets in $Spec_G(R/I)$, where $J_i\lhd_G R$ containing $I$. Then it is easy to see that we obtain $V_G^R(J_1)\supseteq V_G^R(J_2)\supseteq ...$ which is a descending chain of closed sets in $Spec_G(R)$. By hypothesis, there exists $k\in \mathbb{Z}^+$ such that $V_G^R(J_i)=V_G^R(J_k)$ for each $i\geq k$. Therefore $V_G^{R/I}(J_i/I)=V_G^{R/I}(J_k/I)$ for each $i\geq k$, as desired.
\end{proof}
\begin{proposition} \label{Proposition 2.2} Let $R$ be a $G$-graded ring. Then the following hold:
\begin{enumerate}
\item[(a)] $R$ has Noetherian graded prime spectrum if and only if the ascending chain condition for graded radical ideals of $R$  hold.
\item[(b)] If $R$ is a graded Noetherian ring, then $Spec_G(R)$ is a Noetherian space.
\end{enumerate}
\end{proposition}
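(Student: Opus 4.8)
The plan is to use Lemma \ref{Lemma 2.1} to translate the topological condition on $Spec_G(R)$ into a purely ideal-theoretic one, and then deduce (b) as an immediate special case of (a). For part (a), I would first recall that a topological space is Noetherian exactly when its closed subsets satisfy the descending chain condition. The closed subsets of $Spec_G(R)$ are precisely the sets $V_G^R(I)$ with $I\lhd_G R$, and since $Gr(Gr(I))=Gr(I)$, Lemma \ref{Lemma 2.1}(3) gives $V_G^R(I)=V_G^R(Gr(I))$ with $Gr(I)$ a graded radical ideal; hence every closed subset of $Spec_G(R)$ is of the form $V_G^R(J)$ for some graded radical ideal $J$ of $R$.

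Next I would establish that $J\mapsto V_G^R(J)$ is an inclusion-reversing bijection from the set of graded radical ideals of $R$ onto the set of closed subsets of $Spec_G(R)$. Surjectivity is the remark just made. For the rest, suppose $J_1,J_2$ are graded radical ideals with $V_G^R(J_1)\subseteq V_G^R(J_2)$; applying $\xi$ and using Lemma \ref{Lemma 2.1}(2) together with the order-reversing property of $\xi$ yields $Gr(J_2)\subseteq Gr(J_1)$, i.e. $J_2\subseteq J_1$. In particular $V_G^R(J_1)=V_G^R(J_2)$ forces $J_1=J_2$, so the map is injective and reflects inclusions, while it clearly reverses inclusions since $J_1\subseteq J_2$ implies $V_G^R(J_2)\subseteq V_G^R(J_1)$.

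Consequently a descending chain of closed subsets $V_G^R(J_1)\supseteq V_G^R(J_2)\supseteq\cdots$ corresponds bijectively (via taking graded radicals) to an ascending chain of graded radical ideals $Gr(J_1)\subseteq Gr(J_2)\subseteq\cdots$, and one chain stabilizes if and only if the other does. Hence $Spec_G(R)$ satisfies the descending chain condition on closed sets precisely when $R$ satisfies the ascending chain condition on graded radical ideals, which is (a). For (b), a graded Noetherian ring satisfies the ascending chain condition on all graded ideals, in particular on the graded radical ones, so part (a) immediately gives that $Spec_G(R)$ is a Noetherian space.

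I do not expect a serious obstacle here; the only point requiring care is to verify that the correspondence between closed sets and graded radical ideals is genuinely bijective and order-reversing in both directions, which is exactly what parts (2) and (3) of Lemma \ref{Lemma 2.1} supply.
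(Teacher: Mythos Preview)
Your proposal is correct and follows essentially the same approach as the paper: both arguments use Lemma~\ref{Lemma 2.1}(2) and (3) to pass between descending chains of closed sets $V_G^R(I_i)$ and ascending chains of graded radical ideals $Gr(I_i)$, and then deduce (b) directly from (a). The only cosmetic difference is that you package the argument as a single order-reversing bijection between graded radical ideals and closed subsets, whereas the paper verifies the two chain conditions separately.
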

\begin{proof} (a) Suppose that $Spec_G(R)$ is a Noetherian space and let $I_1\subseteq I_2\subseteq ...$ be an ascending chain of graded radical ideals of $R$. By \cite[Proposition 3.1(1)]{refai2000graded}, we have $V_G^R(I_1)\supseteq V_G^R(I_2)\supseteq ...$ which is a descending chain of closed sets in $Spec_G(R)$. So $\exists k\in \mathbb{Z}^+$ such that $V_G^R(I_t)=V_G^R(I_k)$ for each $t\geq k$. By Lemma \ref{Lemma 2.1}(3), we obtain $I_t=Gr(I_t)=Gr(I_k)=I_k$ for each $t\geq k$ which completes the proof of the first direction. Now, suppose that the ascending chain condition for graded radical ideals hold and let $V_G^R(I_1)\supseteq V_G^R(I_2)\supseteq ...$ be a descending chain of closed sets in $Spec_G(R)$. Using Lemma \ref{Lemma 2.1}(2) and \cite[Proposition 2.4]{refai2000graded}, we have  $Gr(I_1)\subseteq Gr(I_2)\subseteq ...$ is an ascending chain of graded radical ideals of $R$ and hence $\exists k\in \mathbb{Z}^+$ such that $Gr(I_k)=Gr(I_t)$ for each $t\geq k$. Again, by Lemma \ref{Lemma 2.1}(3), we get $V_G^R(I_k)=V_G^R(I_t)$ for each $t\geq k$, as desired. Now, the proof of (b) is trivial by (a).
\end{proof}
 Let $W$ be a topological space. Then $W$ is said to be irreducible if $W\neq \emptyset$ and whenever $W_1$ and $W_2$ are closed subsets in $W$ with $W=W_1\cup W_2$, then either $W=W_1$ or $W=W_2$. Let $W^\prime\subset W$. Then $W^\prime$ is irreducible if it is an irreducible space with the relative topology. The maximal irreducible subsets of $W$ are called the irreducible components of $W$. It is easy to see that every singleton subset of $W$ is irreducible and a subset $Z$ of $W$ is irreducible if and only if its closure is irreducible (see, for example, \cite{atiyah1969introduction,bourbakialgebre}).
 \begin{theorem} \label{Theorem 2.3}
 Let $R$ be a $G$-graded ring and $Y\subseteq Spec_G(R)$. Then $Y$ is irreducible closed subset of $Spec_G(R)$ if and only if $Y=V_G^R(I)$ for some $I\in Spec_G(R)$.
 \end{theorem}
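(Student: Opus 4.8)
The plan is to prove both implications of the equivalence, using Lemma \ref{Lemma 2.1} as the main tool. For the ``if'' direction, suppose $Y = V_G^R(I)$ for some graded prime ideal $I \in Spec_G(R)$. By Lemma \ref{Lemma 2.1}(2), $\xi(V_G^R(I)) = Gr(I) = I$ since $I$ is a graded prime ideal (hence a graded radical ideal). So $Y$ is closed by definition, and I need to show it is irreducible. Suppose $Y = Y_1 \cup Y_2$ with $Y_1, Y_2$ closed in $Y$ (equivalently, closed in $Spec_G(R)$ since $Y$ is closed); write $Y_i = V_G^R(I_i)$ for graded ideals $I_i$. Then $I = \xi(Y) = \xi(Y_1 \cup Y_2) = \xi(Y_1) \cap \xi(Y_2) = Gr(I_1) \cap Gr(I_2)$. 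Since $I$ is prime and contains the product $Gr(I_1) Gr(I_2)$ (or using that $I \supseteq Gr(I_1) \cap Gr(I_2)$ forces $I \supseteq Gr(I_1)$ or $I \supseteq Gr(I_2)$ by primeness applied to homogeneous elements), I conclude $I = Gr(I_1)$ or $I = Gr(I_2)$, hence $Y = Cl(Y_i) = V_G^R(Gr(I_i)) = V_G^R(I_i) = Y_i$ for one of the two indices. This gives irreducibility.

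For the ``only if'' direction, suppose $Y$ is an irreducible closed subset of $Spec_G(R)$. Since $Y$ is closed, Lemma \ref{Lemma 2.1}(1) gives $Y = Cl(Y) = V_G^R(\xi(Y))$. Set $I = \xi(Y) = \bigcap_{p \in Y} p$; this is a graded ideal, and in fact a graded radical ideal since each $p$ is. I must show $I$ is a graded prime ideal. Since $Y \neq \emptyset$, $I \neq R$. Suppose $a, b \in h(R)$ with $ab \in I$ but $a \notin I$ and $b \notin I$. Consider the closed sets $Y_1 = Y \cap V_G^R((a)) $ and $Y_2 = Y \cap V_G^R((b))$ where $(a), (b)$ denote the graded ideals generated by $a$ and $b$. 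Since $ab \in p$ for every $p \in Y$ and each $p$ is prime, every $p \in Y$ contains $a$ or contains $b$, so $Y = Y_1 \cup Y_2$. By irreducibility, $Y = Y_1$ or $Y = Y_2$; say $Y = Y_1$, which means $a \in p$ for all $p \in Y$, i.e. $a \in \xi(Y) = I$, a contradiction. Hence $I$ is graded prime and $Y = V_G^R(I)$ with $I \in Spec_G(R)$.

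The step requiring the most care is the primeness argument for homogeneous elements, specifically checking that for a graded prime ideal $p$, $ab \in p$ with $a, b$ homogeneous indeed forces $a \in p$ or $b \in p$ — this is the definition of a graded prime ideal and is standard, but I would want to state it cleanly. A secondary subtlety is the reduction from arbitrary closed subsets $Y_i$ of $Y$ to sets of the form $V_G^R(I_i)$: since $Y$ itself is closed in $Spec_G(R)$, a subset closed in the relative topology is closed in $Spec_G(R)$, so this reduction is harmless. I would also double-check that $\xi(Y_1 \cup Y_2) = \xi(Y_1) \cap \xi(Y_2)$, which is immediate from the definition of $\xi$ as an intersection, and that $V_G^R(Gr(J)) = V_G^R(J)$, which follows from Lemma \ref{Lemma 2.1}(3) together with $Gr(Gr(J)) = Gr(J)$.
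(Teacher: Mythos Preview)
Your proof is correct, but the route differs from the paper's. For the $\Leftarrow$ direction, the paper observes that $V_G^R(I) = Cl(\{I\})$ (by Lemma~\ref{Lemma 2.1}(1), since $\xi(\{I\}) = I$) and then invokes the general topological fact that the closure of an irreducible set is irreducible; you instead verify irreducibility directly by decomposing $Y = V_G^R(I_1) \cup V_G^R(I_2)$ and using the ideal-theoretic characterization of graded primes. For the $\Rightarrow$ direction, the paper simply cites an external lemma (\cite[Lemma~4.3]{salam2021zariski}) asserting that $\xi(Y)$ is graded prime whenever $Y$ is irreducible, whereas you reprove this fact from scratch via the auxiliary closed sets $Y \cap V_G^R((a))$ and $Y \cap V_G^R((b))$. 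Your approach is more self-contained and makes the role of primeness explicit, at the cost of some length; the paper's approach is shorter but leans on outside references and a standard topological lemma. One minor omission: in the $\Leftarrow$ direction you should note that $Y \neq \emptyset$ (which holds because $I \in V_G^R(I)$), since irreducibility requires nonemptiness.
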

 \begin{proof}
 $\Rightarrow$: Since $Y$ is irreducible in $Spec_G(R)$, then $\xi(Y)\in Spec_G(R)$ by \cite[Lemma 4.3]{salam2021zariski}. Since $Y$ is closed, then $Y=Cl(Y)=V_G^R(\xi(Y))$ by Lemma \ref{Lemma 2.1}(1). Choose $I=\xi(Y)$. \\
 $\Leftarrow$: Suppose that $Y=V_G^R(I)$ for some $I\in Spec_G(R)$. By Lemma \ref{Lemma 2.1}(1), $Y=Cl(\{I\})$. Since $\{I\}$ is irreducible, then its closure $Cl(\{I\})=Y$ is irreducible, as needed.
 \end{proof}
 \begin{definition}\cite{refai2004graded}
 Let $R$ be a $G$-graded ring and $I\lhd_G R$. The graded prime ideal that is minimal with respect to containing $I$ is called minimal graded prime divisor of $I$ (or minimal graded prime ideal over $I$). That is, $p$ is a minimal graded prime divisor of $I$ if $p\in V_G^R(I)$ and whenever $J\in Spec_G(R)$ with $I\subseteq J\subseteq p$, we have $p=J$.
 \end{definition}
\begin{theorem} \label{theorem 2.5}
Let $R$ be a $G$-graded ring and $I\lhd_G R$. Then the following hold:
\begin{enumerate}
\item If $Y\subseteq V_G^R(I)$, then $Y$ is an irreducible component of the subspace $V_G^R(I)$ if and only if $Y=V_G^R(p)$ for some minimal graded prime divisor of $I$.
\item If $V_G^R(I)$ is a Noetherian topological subspace of $Spec_G(R)$, then $I$ contains only a finite number of minimal graded prime divisors.
\end{enumerate}
\end{theorem}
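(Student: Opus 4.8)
The plan is to prove (1) through the inclusion-reversing dictionary between graded prime ideals and irreducible closed subsets supplied by Theorem \ref{Theorem 2.3}, and then to obtain (2) as a quick consequence of the standard fact that a Noetherian topological space has only finitely many irreducible components.

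For part (1), I would first record two elementary observations to be used throughout: for graded primes $p,q$ of $R$ one has $V_G^R(p)\subseteq V_G^R(q)$ if and only if $q\subseteq p$ (one direction is immediate; the other follows by testing membership of $p$), and consequently $V_G^R(p)=V_G^R(q)$ if and only if $p=q$, since a graded prime ideal equals its own graded radical and Lemma \ref{Lemma 2.1}(3) applies. I would also note that an irreducible component of $V_G^R(I)$ is automatically closed in $V_G^R(I)$, hence closed in $Spec_G(R)$, because the closure of an irreducible set is irreducible and $V_G^R(I)$ is itself closed. With this in hand, for the forward implication: an irreducible component $Y$ of $V_G^R(I)$ is an irreducible closed subset of $Spec_G(R)$ lying in $V_G^R(I)$, so Theorem \ref{Theorem 2.3} gives $Y=V_G^R(p)$ with $p\in Spec_G(R)$, and $Y\subseteq V_G^R(I)$ forces $I\subseteq p$; if $q\in Spec_G(R)$ with $I\subseteq q\subseteq p$ then $V_G^R(p)\subseteq V_G^R(q)\subseteq V_G^R(I)$ with $V_G^R(q)$ irreducible, so maximality of $Y$ yields $V_G^R(q)=V_G^R(p)$, whence $q=p$ and $p$ is a minimal graded prime divisor of $I$. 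For the converse: if $p$ is a minimal graded prime divisor of $I$, then $Y=V_G^R(p)$ is irreducible and sits inside $V_G^R(I)$; given any irreducible $Z$ with $Y\subseteq Z\subseteq V_G^R(I)$, I would pass to $Cl(Z)$ (still irreducible, still inside $V_G^R(I)$ since the latter is closed), write $Cl(Z)=V_G^R(q)$ via Theorem \ref{Theorem 2.3}, deduce $I\subseteq q\subseteq p$, hence $q=p$ by minimality, and conclude $Z\subseteq Cl(Z)=V_G^R(p)=Y$, so $Y=Z$; thus $Y$ is maximal among irreducible subsets of $V_G^R(I)$.

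For part (2), I would invoke the fact that every closed subset of a Noetherian space is a finite union of irreducible closed subsets; applying this to $V_G^R(I)$ and deleting redundant terms shows that $V_G^R(I)$ has only finitely many irreducible components. By part (1) these components are exactly the sets $V_G^R(p)$ with $p$ a minimal graded prime divisor of $I$, and distinct such $p$ give distinct components, so $I$ has only finitely many minimal graded prime divisors.

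The point needing care, rather than genuine difficulty, is the forward direction of (1): one must check that an irreducible component is closed before Theorem \ref{Theorem 2.3} becomes applicable, and one must translate ``$p$ minimal over $I$'' correctly into ``$V_G^R(p)$ maximal among irreducible subsets of $V_G^R(I)$'' using the order-reversing correspondence. For (2) the only external ingredient is the finiteness of irreducible components of a Noetherian space, which is standard (see, e.g., \cite{atiyah1969introduction}).
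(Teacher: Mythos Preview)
Your proposal is correct and follows essentially the same route as the paper: both directions of (1) rest on Theorem \ref{Theorem 2.3} together with the order-reversing correspondence $V_G^R(p)\subseteq V_G^R(q)\Leftrightarrow q\subseteq p$, and (2) is deduced from the standard finiteness of irreducible components in a Noetherian space. The only cosmetic difference is that in the converse of (1) the paper works with $\xi(Y')$ directly (citing that the intersection over an irreducible family is graded prime), whereas you pass to $Cl(Z)$ and reapply Theorem \ref{Theorem 2.3}; these are equivalent maneuvers.
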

\begin{proof}(1) $\Rightarrow$: Suppose that $Y$ is an irreducible component of the subspace $V_G^R(I)$. By \cite[p. 13, Exercise 20(iii)]{atiyah1969introduction}, $Y$ is an irreducible closed in the subspace $V_G^R(I)$ and hence it is irreducible closed in $Spec_G(R)$. By Theorem \ref{Theorem 2.3}, $Y=V_G^R(p)$ for some $p\in Spec_G(R)$. It is clear that $p\in V_G^R(I)$. Let $J\in Spec_G(R)$ with $I\subseteq J\subseteq p$. Again, by Theorem \ref{Theorem 2.3}, $V_G^R(J)$ is irreducible in $Spec_G(R)$. But $V_G^R(J)\subseteq V_G^R(I)$ and $V_G^R(I)$ is closed in $Spec_G(R)$. This follows that $V_G^R(J)$ is irreducible in the subspace $V_G^R(I)$. Since $V_G^R(p)\subseteq V_G^R(J)\subseteq V_G^R(I)$ and $V_G^R(p)$ is irreducible component of $V_G^R(I)$, then $V_G^R(p)=V_G^R(J)$ and hence $p\subseteq J$. This shows that $p$ is a minimal graded prime divisor of $I$. \\
$\Leftarrow$: Suppose that $Y=V_G^R(p)$ for some minimal graded prime divisor $p$ of $I$. By Theorem \ref{Theorem 2.3}, $Y$ is irreducible in $Spec_G(R)$. Now, it is easy to see that $Y$ is irreducible in the subspace $V_G^R(I)$. Let $Y^\prime$ be irreducible in the subspace $V_G^R(I)$ with $Y\subseteq Y^\prime$. To complete the proof, it is enough to show that $Y^\prime\subseteq Y$. By Lemma \ref{Lemma 2.1}(2), we obtain $I\subseteq Gr(I)=\xi(V_G^R(I))\subseteq \xi(Y^\prime)\subseteq \xi(V_G^R(p))=Gr(p)$. Since $p\in Spec_G(R)$, then $Gr(p)=p$ by \cite[Proposition 2.4(5)]{refai2000graded}. Also, by \cite[Lemma 4.3]{salam2021zariski}, $\xi(Y^\prime)\in Spec_G(R)$. Since $I\subseteq \xi(Y^\prime)\subseteq p$ and $p$ is minimal graded prime ideal over $I$, then $\xi(Y^\prime)=p$. By Lemma \ref{Lemma 2.1}(1), $Y^\prime\subseteq Cl(Y^\prime)=V_G^R(\xi(Y^\prime))=V_G^R(p)=Y$, as desired.\\
(2) By \cite[P. 124, Proposition 10]{bourbakialgebre}, the subspace $V_G^R(I)$ has only finitely many irreducible components as $V_G^R(I)$ is Noetherian space. Now the result follows by (1).
\end{proof}
Let $R$ be a $G$-graded ring and $p\lhd_G R$. It is clear that $p$ is a minimal graded prime ideal of $R$ if and only if $p$ is a minimal graded prime divisor of $\{0\}$. Now, the following result can be easily checked by replacing $I$ by $\{0\}$ in Theorem \ref{theorem 2.5}.
\begin{corollary} The following hold for any $G$-graded ring $R$:
\begin{enumerate}
\item[(i)] If $Y\subseteq Spec_G(R)$, then $Y$ is an irreducible component of $Spec_G(R)$ if and only if $Y=V_G^R(p)$ for some minimal graded prime ideal $p$ of $R$.
\item[(ii)] If $Spec_G(R)$ is a Noetherian space, then $R$ contains only finitely many minimal graded prime ideals. Hence, by Proposition \ref{Proposition 2.2}, every graded Noetherian ring has only finitely many minimal graded prime ideals.
\end{enumerate}
\end{corollary}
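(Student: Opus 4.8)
The plan is to deduce both statements directly from Theorem \ref{theorem 2.5} by specializing to the zero ideal, using the observation recorded immediately above the corollary that a graded ideal $p\lhd_G R$ is a minimal graded prime ideal of $R$ precisely when it is a minimal graded prime divisor of $\{0\}$.

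For part (i), I would first note that $V_G^R(\{0\})=Spec_G(R)$, since every graded prime ideal of $R$ contains $\{0\}$. Hence any $Y\subseteq Spec_G(R)$ automatically satisfies $Y\subseteq V_G^R(\{0\})$, and applying Theorem \ref{theorem 2.5}(1) with $I=\{0\}$ gives: $Y$ is an irreducible component of the subspace $V_G^R(\{0\})=Spec_G(R)$ if and only if $Y=V_G^R(p)$ for some minimal graded prime divisor $p$ of $\{0\}$, which by the preceding remark is exactly the assertion that $p$ is a minimal graded prime ideal of $R$.

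For part (ii), assuming $Spec_G(R)$ is a Noetherian space, the subspace $V_G^R(\{0\})=Spec_G(R)$ is itself a Noetherian topological subspace of $Spec_G(R)$, so Theorem \ref{theorem 2.5}(2) applied with $I=\{0\}$ yields that $\{0\}$ has only finitely many minimal graded prime divisors; equivalently, $R$ has only finitely many minimal graded prime ideals. The closing sentence then follows by combining this with Proposition \ref{Proposition 2.2}(b), which guarantees that $Spec_G(R)$ is Noetherian whenever $R$ is a graded Noetherian ring.

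Since the corollary is a pure specialization of Theorem \ref{theorem 2.5}, I do not anticipate any genuine obstacle; the only points worth checking carefully are the identification $V_G^R(\{0\})=Spec_G(R)$ and the harmless fact that $Spec_G(R)$ being Noetherian as a topological space is the same as it being a Noetherian topological subspace of itself, both of which are immediate from the definitions.
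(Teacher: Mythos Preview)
Your proposal is correct and matches the paper's approach exactly: the paper states that the corollary ``can be easily checked by replacing $I$ by $\{0\}$ in Theorem \ref{theorem 2.5},'' which is precisely the specialization you carry out. Your write-up even makes explicit the identifications (such as $V_G^R(\{0\})=Spec_G(R)$) that the paper leaves implicit.
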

\begin{theorem} \label{Theorem 2.7}
Let $R$ be a $G$-graded ring with Noetherian graded prime spectrum. Then every graded radical ideal $I$ of $R$ is the intersection of a finite number of minimal graded prime divisors of it.
\end{theorem}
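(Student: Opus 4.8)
The plan is to derive this statement as a direct consequence of the structural results already established for Noetherian graded prime spectra. Let $I$ be a graded radical ideal of $R$. Since $R$ has Noetherian graded prime spectrum, the closed subspace $V_G^R(I)$ of $Spec_G(R)$ is itself a Noetherian topological space (a closed subspace of a Noetherian space is Noetherian). Hence by Theorem \ref{theorem 2.5}(2), $I$ has only finitely many minimal graded prime divisors; call them $p_1,\dots,p_n$. The content of the theorem is therefore the equality $I=\bigcap_{i=1}^{n}p_i$.

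To prove this equality, one inclusion is immediate: since each $p_i$ contains $I$, we have $I\subseteq\bigcap_{i=1}^{n}p_i$. For the reverse inclusion I would use the fact that the graded radical of a graded ideal is the intersection of all graded prime ideals containing it; more precisely, by Lemma \ref{Lemma 2.1}(2) combined with part (1), $Gr(I)=\xi(V_G^R(I))$, the intersection of all members of $V_G^R(I)$. The key point is that this intersection over all of $V_G^R(I)$ equals the intersection over just the minimal graded prime divisors $p_1,\dots,p_n$: indeed, every $p\in V_G^R(I)$ contains some minimal graded prime divisor $p_i$ of $I$ (here one needs that each prime above $I$ dominates a minimal one, which in the Noetherian setting is guaranteed since the descending chains of graded primes in $V_G^R(I)$ stabilize, or alternatively via Zorn's lemma applied to chains of graded primes between $I$ and $p$), so $\bigcap_{i=1}^{n}p_i\subseteq\bigcap_{p\in V_G^R(I)}p=Gr(I)$. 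Since $I$ is a graded radical ideal, $Gr(I)=I$, and we conclude $\bigcap_{i=1}^{n}p_i\subseteq I$.

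Combining the two inclusions gives $I=\bigcap_{i=1}^{n}p_i$, as desired. The main obstacle is the verification that every graded prime ideal containing $I$ contains one of the \emph{minimal} graded prime divisors of $I$; this is where Noetherianness is genuinely used, and I would handle it either by appealing to the descending chain condition on closed sets $V_G^R(p)\supseteq V_G^R(p')\supseteq\cdots$ inside the Noetherian subspace $V_G^R(I)$ (which forces the chain of graded primes $p\subseteq p'\subseteq\cdots$ to terminate at a minimal one), or by a standard Zorn's lemma argument noting that the intersection of a chain of graded prime ideals is again a graded prime ideal. Everything else is formal bookkeeping with the results of this section, so the proof should be short.
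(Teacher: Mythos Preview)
Your proof is correct and takes a somewhat different route from the paper's. The paper argues topologically: after establishing that $I$ has finitely many minimal graded prime divisors $p_1,\dots,p_n$, it invokes Theorem~\ref{theorem 2.5}(1) to identify the $V_G^R(p_i)$ as precisely the irreducible components of $V_G^R(I)$, uses the general fact that a space is the union of its irreducible components to obtain $V_G^R(I)=\bigcup_{i=1}^n V_G^R(p_i)=V_G^R\bigl(\bigcap_{i=1}^n p_i\bigr)$, and then applies Lemma~\ref{Lemma 2.1}(3) to conclude $I=Gr(I)=Gr\bigl(\bigcap_i p_i\bigr)=\bigcap_i p_i$. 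You instead argue directly on primes, reducing to the claim that every $p\in V_G^R(I)$ contains some $p_i$, so that $\bigcap_i p_i=\xi(V_G^R(I))=Gr(I)=I$. Your route avoids the irreducible-component machinery, while the paper's route stays within the topological framework it has just built; both are short.

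One correction: your DCC justification for the key claim is oriented the wrong way. To produce a minimal graded prime divisor below a given $p\in V_G^R(I)$ you need a \emph{descending} chain of graded primes $p\supseteq q_1\supseteq q_2\supseteq\cdots$ above $I$, and this corresponds to an \emph{ascending} chain $V_G^R(p)\subseteq V_G^R(q_1)\subseteq\cdots$ of closed sets, which DCC on closed sets does not control. The chain you actually wrote, $p\subseteq p'\subseteq\cdots$ with $V_G^R(p)\supseteq V_G^R(p')\supseteq\cdots$, does stabilize by Noetherianness, but at a prime maximal in the chain, not minimal over $I$. Your Zorn's lemma alternative is the correct justification (and needs no Noetherian hypothesis): the intersection of a chain of graded prime ideals containing $I$ is again a graded prime ideal containing $I$, so Zorn applied to the graded primes between $I$ and $p$ yields a minimal one. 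This is exactly the content of \cite[Corollary~2.3]{refai2004graded}, which the paper itself invokes.
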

\begin{proof} Note that $R$ is the intersection of the empty family of graded prime divisors of $R$. Suppose that $I\neq R$. Since $Spec_G(R)$ is a Noetherian space, then the subspace $V_G^R(I)$ is Noetherian by \cite[p. 123, Proposition 8(i)]{bourbakialgebre}. Since $I\neq R$, then $I$ has at least one minimal graded prime divisor by \cite[Corollary 2.3]{refai2004graded}. Using Theorem \ref{theorem 2.5}(2), $I$ contains only a finite number of minimal graded prime divisors $p_1, p_2, ..., p_n$, say, where $n\in \mathbb{Z}^{+}$. So $V_G^R(p_1), V_G^R(p_2), ..., V_G^R(p_n)$ are the only irreducible components of $V_G^R(I)$ by Theorem \ref{theorem 2.5}(1). Since every topological space is the union of its irreducible components, then $V_G^R(I)=V_G^R(p_1)\cup V_G^R(p_2)\cup ...\cup V_G^R(p_n)$. By \cite[Proposition 2.1(3)]{ozkiirisci2013graded}, we obtain $V_G^R(I)=V_G^R(\bigcap\limits_{i=1}^n p_i)$. By hypothesis and using Lemma \ref{Lemma 2.1}(3), we have $I=Gr(I)=Gr(\bigcap\limits_{i=1}^n p_i)=\bigcap\limits_{i=1}^n Gr(p_i)=\bigcap\limits_{i=1}^n p_i$, as needed.
\end{proof}
Recall that a graded ideal $I$ of a $G$-graded ring $R$ is said to be graded finitely generated if $I=Rr_1+Rr_2+...+Rr_n$ for some $r_1, r_2,..., r_n\in h(I)=I\cap h(R)$, see \cite{nastasescu2004methods}.
\begin{definition} We say that a graded ideal $I$ of a $G$-graded ring $R$ is an $RFG_g$-ideal if $Gr(I)=Gr(J)$ for some graded finitely generated ideal $J$ of $R$. In addition, we say that $R$ has property $(RFG_g)$ if every graded ideal $I$ of $R$ is an $RFG_g$-ideal.
\end{definition}
\begin{proposition}\label{proposition 2.9}
Let $R$ be a $G$-graded ring. Then:
\begin{enumerate}
\item If $I$ and $J$ are $RFG_g$-ideals of $R$, then so are $IJ$ and $I \cap J$.
\item If $I$ is an $RFG_g$-ideal of $R$, then $Gr(I)=Gr(Rx_1+Rx_2+...+Rx_n)$ for some $n\in\mathbb{Z}^+$ and $x_1, x_2,...,x_n\in h(I)$.
\end{enumerate}
\end{proposition}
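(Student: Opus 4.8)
The plan is to reduce both parts to a handful of standard facts about the graded radical: idempotency $Gr(Gr(A))=Gr(A)$ and monotonicity $A\subseteq B\Rightarrow Gr(A)\subseteq Gr(B)$ (both from \cite[Proposition 2.4]{refai2000graded}), together with the identities $Gr(AB)=Gr(A\cap B)=Gr(A)\cap Gr(B)$ for graded ideals $A,B$ of $R$. The last chain is easy to verify directly: $AB\subseteq A\cap B$ gives one inclusion, while for a homogeneous $x\in A\cap B$ one has $x^2\in AB$, and for a homogeneous $x\in Gr(A)\cap Gr(B)$ with $x^k\in A$, $x^\ell\in B$ one has $x^{k+\ell}\in A\cap B$; since all the ideals involved are graded, these homogeneous computations propagate to the whole ideals after applying $Gr$.

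For part (1), pick graded finitely generated ideals $I'=Rr_1+\cdots+Rr_m$ and $J'=Rs_1+\cdots+Rs_n$ with all $r_i,s_j$ homogeneous, such that $Gr(I)=Gr(I')$ and $Gr(J)=Gr(J')$. The observation that makes the argument go through is that $I'J'=\sum_{i,j}Rr_is_j$ is again graded finitely generated (each generator $r_is_j$ is a product of homogeneous elements, hence homogeneous), whereas $I'\cap J'$ need not be; so I would route the intersection through the product. Using the identities above, $Gr(IJ)=Gr(I)\cap Gr(J)=Gr(I')\cap Gr(J')=Gr(I'J')$, which shows $IJ$ is an $RFG_g$-ideal; and since likewise $Gr(I\cap J)=Gr(IJ)=Gr(I'J')$, the ideal $I\cap J$ is an $RFG_g$-ideal as well.

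For part (2), write $Gr(I)=Gr(J)$ with $J=Rr_1+\cdots+Rr_n$ and each $r_i\in h(J)\subseteq h(R)$. Since $r_i\in J\subseteq Gr(J)=Gr(I)$ is homogeneous, the remark recalled in the preliminaries gives $m_i>0$ with $x_i:=r_i^{m_i}\in I$; being a power of a homogeneous element, $x_i$ is homogeneous, so $x_i\in h(I)$. Set $K=Rx_1+\cdots+Rx_n$ (a graded finitely generated ideal). On one hand $K\subseteq I$ forces $Gr(K)\subseteq Gr(I)$. On the other hand $x_i=r_i^{m_i}\in K$ gives $r_i\in Gr(K)$, and since $Gr(K)$ is an ideal we get $Rr_i\subseteq Gr(K)$ for all $i$, hence $J\subseteq Gr(K)$ and therefore $Gr(I)=Gr(J)\subseteq Gr(Gr(K))=Gr(K)$. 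Thus $Gr(I)=Gr(K)$ with $K$ generated by $x_1,\dots,x_n\in h(I)$, which is exactly the assertion.

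I expect the only genuine subtlety to be the one already flagged in part (1): a finite intersection of graded finitely generated ideals need not be graded finitely generated, so the $I\cap J$ statement must be obtained from the product $I'J'$ rather than from $I'\cap J'$ directly. Everything else is routine bookkeeping with the graded-radical identities and with the fact that powers of homogeneous elements remain homogeneous.
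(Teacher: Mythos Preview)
Your proof is correct and, for part~(1), essentially identical to the paper's: both route $Gr(I\cap J)$ and $Gr(IJ)$ through $Gr(I'J')$ via the identity $Gr(AB)=Gr(A\cap B)=Gr(A)\cap Gr(B)$, noting that $I'J'$ (unlike $I'\cap J'$) is visibly graded finitely generated.

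For part~(2) the strategy is the same---take homogeneous generators $r_i$ of a witness $J$ with $Gr(I)=Gr(J)$, raise each to a power $x_i=r_i^{m_i}\in I$, and show $Gr(I)=Gr(K)$ for $K=\sum Rx_i$---but you and the paper verify the reverse inclusion $Gr(I)\subseteq Gr(K)$ differently. The paper argues via graded primes: any $p\in Spec_G(R)$ containing $K$ must contain each $r_i$ (since $r_i^{m_i}\in p$ and $r_i$ is homogeneous), hence $J\subseteq p$, hence $Gr(I)\subseteq p$; then $Gr(I)\subseteq\xi(V_G^R(K))=Gr(K)$ by Lemma~\ref{Lemma 2.1}(2). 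Your argument is more direct and purely algebraic: from $r_i^{m_i}\in K$ you get $r_i\in Gr(K)$, hence $J\subseteq Gr(K)$, hence $Gr(I)=Gr(J)\subseteq Gr(Gr(K))=Gr(K)$ by idempotency. Your route avoids the spectrum entirely and is a bit cleaner; the paper's route has the minor advantage of reusing the $Gr=\xi\circ V_G^R$ machinery already set up in Lemma~\ref{Lemma 2.1}.
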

\begin{proof}
(1) By hypothesis, $Gr(I)=Gr(I^\prime)$ and $Gr(J)=Gr(J^\prime)$ for some graded finitely generated ideals $I^\prime$ and $J^\prime$ of $R$. By \cite[Proposition 2.4(4)]{refai2000graded}, $Gr(IJ)=Gr(I\cap J)=Gr(I)\cap Gr(J)=Gr(I^\prime)\cap Gr(J^\prime)=Gr(I^\prime J^\prime)$. It is straightforward to see that $I^\prime J^\prime $ is a graded finitely generated ideal of $R$. Therefore, $IJ$ and $I\cap J$ are $RFG_g$-ideals of $R$.\\
(2) By hypothesis, $Gr(I)=Gr(T)$ for some graded finitely generated ideal $T$ of $R$ and hence $T=Ra_1+Ra_2+...+Ra_m$ for some $a_1, a_2, ..., a_m\in h(T)=T\cap h(R)$. Thus $Gr(I)=Gr(Ra_1+Ra_2+...+Ra_m)$. Since $a_i\in h(R)\cap Ra_i\subseteq h(R)\cap Gr(I)$ for each $i=1,...,m$, then $a_i\in \sqrt{I}$ for each i. This follows that for each $i=1,...,m$, $\exists k_i\in \mathbb{Z}^+$ such that $a_i^{k_i}\in I$. So $Ra_1^{k_1}+Ra_2^{k_2}+...+Ra_m^{k_m}\subseteq I$ and so $Gr(Ra_1^{k_1}+Ra_2^{k_2}+...+Ra_m^{k_m})\subseteq Gr(I)$. Now, let $p\in Spec_G(R)$ with $Ra_1^{k_1}+Ra_2^{k_2}+...+Ra_m^{k_m}\subseteq p$. Note that for each i, we have $a_i^{k_i}\in Ra_i\subseteq p$ and hence $a_i\in \sqrt{p}$. But $a_i\in h(R)$. This implies that $a_i\in Gr(p)=p$ as $p\in Spec_G(R)$. So $Ra_1+Ra_2+...+Ra_m\subseteq p$ and so $Gr(I)=Gr(Ra_1+Ra_2+...+Ra_m)\subseteq Gr(p)=p$. By Lemma \ref{Lemma 2.1}(2), we get $Gr(I)\subseteq \xi(V_G^R(Ra_1^{k_1}+Ra_2^{k_2}+...+Ra_m^{k_m}))=Gr(Ra_1^{k_1}+Ra_2^{k_2}+...+Ra_m^{k_m})$. Therefore $Gr(I)=Gr(Ra_1^{k_1}+Ra_2^{k_2}+...+Ra_m^{k_m})$. Choose $n=m$ and $x_i=a_i^{k_i}\in h(I)$ for each $i$, which completes the proof.
\end{proof}
Let $R$ be a $G$-graded ring. In \cite[Theorem 2.3]{ozkiirisci2013graded}, it has been proved that for each $r \in h(R)$, the set $D_r=Spec_G(R)-V_G^R(rR)$ is open in $Spec_G(R)$ and the family $\{D_r\, \mid \, r\in h(R)\}$ is a base for the Zariski topology on $Spec_G(R)$. In addition, $D_r$ is compact for each $r\in h(R)$. Now, we need the following lemma to prove the next theorem and it will also be used in the last section.
\begin{lemma} \cite[p. 123, Proposition 9]{bourbakialgebre} \label{Lemma 2.10} A topological space $X$ is Noetherian if and only if every open subset of $X$ is compact
\end{lemma}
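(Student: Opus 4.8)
The plan is to prove both implications directly from the definitions, using the fact that $X$ being Noetherian means that every nonempty family of open subsets of $X$ has a maximal element (equivalently, the ascending chain condition for open subsets holds), and that a subset $U$ is compact means that every cover of $U$ by open subsets of $X$ admits a finite subcover.

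For the direction ``Noetherian $\Rightarrow$ every open subset is compact'', I would fix an open subset $U\subseteq X$ and an open cover $\{V_i\}_{i\in I}$ of $U$, where each $V_i$ is open in $X$. First I would form the family $\mathcal{F}$ of all finite unions $V_{i_1}\cup\cdots\cup V_{i_n}$; this is a nonempty family of open subsets of $X$, so by the maximal condition it has a maximal member $W=V_{i_1}\cup\cdots\cup V_{i_n}$. Then I would argue that $W=U$: otherwise there is a point $x\in U\setminus W$, which lies in some $V_j$, and $W\cup V_j$ is a strictly larger element of $\mathcal{F}$, contradicting maximality. Hence $\{V_{i_1},\dots,V_{i_n}\}$ is the desired finite subcover. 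Note this even shows \emph{every} subset of $X$ is compact, but we only need it for open ones.

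For the converse, assuming every open subset of $X$ is compact, I would take an ascending chain $U_1\subseteq U_2\subseteq\cdots$ of open subsets and set $U=\bigcup_{n\geq 1}U_n$, which is open, hence compact. Since $\{U_n\}_{n\geq 1}$ is an open cover of $U$, there is a finite subcover, and because the chain is increasing this finite subfamily has a largest member $U_N$ with $U=U_N$; therefore $U_n=U_N$ for all $n\geq N$, so the chain stabilizes and $X$ is Noetherian.

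I do not expect a real obstacle here: the only mild points of care are (i) phrasing the cover of the subspace $U$ in terms of open sets of $X$, which is harmless since the $V_i\cap U$ are open in $X$ when $U$ is open, and (ii) invoking the correct equivalent form of the Noetherian condition (maximal element in the first implication, ascending chain condition in the second). Since the statement is quoted verbatim from Bourbaki, in the paper itself I would simply cite it; the sketch above records the short proof behind that citation.
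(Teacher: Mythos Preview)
Your argument is correct, and the paper itself gives no proof at all: it simply records the lemma with the Bourbaki citation, exactly as you anticipated in your last paragraph. One cosmetic slip: in the forward direction you claim ``$W=U$'', but your contradiction argument only yields (and only needs) $U\subseteq W$, since the $V_i$ may extend beyond $U$; this does not affect the validity of the proof.
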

\begin{theorem} \label{Theorem 2.11} A $G$-graded ring $R$ has Noetherian graded prime spectrum if and only if $R$ has property $(RFG_g)$.
\end{theorem}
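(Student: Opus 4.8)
The plan is to prove the biconditional in Theorem \ref{Theorem 2.11} by combining Proposition \ref{Proposition 2.2}(a), which reduces ``Noetherian graded prime spectrum'' to the ascending chain condition (ACC) on graded radical ideals, with the base/compactness facts for the Zariski topology on $Spec_G(R)$ recalled just before Lemma \ref{Lemma 2.10}, together with Lemma \ref{Lemma 2.10} itself.

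For the forward direction, assume $Spec_G(R)$ is Noetherian and let $I \lhd_G R$. The open set $D = Spec_G(R) - V_G^R(I)$ is covered by basic opens $\{D_r \mid r \in h(I)\}$: indeed, if $p \notin V_G^R(I)$ then $I \not\subseteq p$, and since $I = \oplus_{g}(I \cap R_g)$ is graded, some homogeneous $r \in h(I)$ satisfies $r \notin p$, i.e. $p \in D_r$; conversely each such $D_r$ lies in $D$ because $rR \subseteq I$. By Lemma \ref{Lemma 2.10}, $D$ is compact, so finitely many $r_1, \dots, r_n \in h(I)$ give $D = D_{r_1} \cup \dots \cup D_{r_n}$. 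Taking complements and using \cite[Proposition 2.1(3)]{ozkiirisci2013graded} as in the proof of Theorem \ref{Theorem 2.7}, $V_G^R(I) = \bigcap_{i=1}^n V_G^R(r_i R) = V_G^R(Rr_1 + \dots + Rr_n)$. Setting $J = Rr_1 + \dots + Rr_n$, a graded finitely generated ideal with $J \subseteq I$, Lemma \ref{Lemma 2.1}(3) gives $Gr(I) = Gr(J)$, so $I$ is an $RFG_g$-ideal; hence $R$ has property $(RFG_g)$.

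For the converse, assume $R$ has property $(RFG_g)$ and verify the ACC on graded radical ideals, invoking Proposition \ref{Proposition 2.2}(a). Let $I_1 \subseteq I_2 \subseteq \dots$ be an ascending chain of graded radical ideals, and put $I = \bigcup_k I_k$, which is again a graded ideal. By hypothesis $Gr(I) = Gr(J)$ for some graded finitely generated $J = Ra_1 + \dots + Ra_m$ with each $a_j \in h(J)$. Since $a_j \in h(R) \cap Gr(I) \subseteq \sqrt{I}$ and $a_j$ is homogeneous, $a_j \in Gr(I)$, so a power $a_j^{t_j} \in I = \bigcup_k I_k$; choosing $k$ large enough to absorb all finitely many $a_j^{t_j}$, we get $a_j^{t_j} \in I_k$ for every $j$. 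Arguing exactly as in the proof of Proposition \ref{proposition 2.9}(2) (the homogeneous-radical argument showing $Gr(Ra_1^{t_1} + \dots + Ra_m^{t_m}) = Gr(Ra_1 + \dots + Ra_m)$), we obtain $Gr(J) \subseteq Gr(I_k)$. Then for every $n \geq k$, $I_n \supseteq I_k$ and $I_n \subseteq I$, so $I_k \subseteq I_n \subseteq Gr(I) = Gr(J) \subseteq Gr(I_k) = I_k$, the last equality because $I_k$ is a graded radical ideal; hence $I_n = I_k$ for all $n \geq k$, establishing the ACC.

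The routine parts are the topological bookkeeping (that the $D_r$ with $r \in h(I)$ cover $Spec_G(R) - V_G^R(I)$, and passing between unions of basic opens and intersections of $V_G^R$'s). The one step needing genuine care — and the main obstacle — is the converse's move from ``$Gr(I)$ is finitely generated up to radical'' to ``the radical stabilizes in the chain'': one must be sure it is the \emph{homogeneous} generators and their powers landing in a single $I_k$, and that the passage $Gr(Ra_1^{t_1} + \dots) = Gr(Ra_1 + \dots)$ really goes through for graded ideals; this is precisely what Proposition \ref{proposition 2.9}(2) supplies, so I would cite it rather than redo the computation.
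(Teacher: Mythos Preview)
Your forward direction is essentially the paper's argument: both use Lemma \ref{Lemma 2.10} to get compactness of $Spec_G(R)-V_G^R(I)$, extract a finite subcover by basic opens, and conclude $Gr(I)=Gr(\sum Rr_i)$ via Lemma \ref{Lemma 2.1}(3). Your refinement of taking $r_i\in h(I)$ (rather than just $h(R)$) is harmless and gives $J\subseteq I$ for free, though the paper does not need this. A minor point: the identity $\bigcap_i V_G^R(Rr_i)=V_G^R(\sum_i Rr_i)$ is \cite[Proposition 2.1(2)]{ozkiirisci2013graded}, not (3); the latter is what Theorem \ref{Theorem 2.7} uses for the union-of-closed-sets direction.

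Your converse, however, takes a genuinely different route. The paper again invokes Lemma \ref{Lemma 2.10}: given any open $U=Spec_G(R)-V_G^R(K)$, property $(RFG_g)$ gives $Gr(K)=Gr(\sum_{i=1}^m Rx_i)$, whence $V_G^R(K)=\bigcap_i V_G^R(Rx_i)$ and $U=\bigcup_i D_{x_i}$ is a finite union of compact sets, so compact. You instead reduce via Proposition \ref{Proposition 2.2}(a) to the ACC on graded radical ideals, form the union $I$ of a chain, apply $(RFG_g)$ to $I$, and push the homogeneous generators (through Proposition \ref{proposition 2.9}(2)) down into a single $I_k$. Both arguments are correct. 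The paper's approach is shorter and more symmetric with the forward direction, avoiding the chain-union manoeuvre entirely; your approach has the virtue of being purely ideal-theoretic once Proposition \ref{Proposition 2.2}(a) is in hand, and it makes transparent exactly where finite generation (up to radical) forces stabilization.
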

\begin{proof} Suppose that $R$ has Noetherian graded prime spectrum and let $I\lhd_G R$. By Lemma \ref{Lemma 2.10}, the open set $Spec_G(R)-V_G^R(I)$ is compact. Since $\{D_r\,\mid\,r\in h(R)\}$ is a base for the Zariski topology on $Spec_G(R)$, then $Spec_G(R)-V_G^R(I)=\bigcup\limits_{i=1}^n D_{r_i}$ for some $r_1, r_2,..., r_n\in h(R)$. By \cite[Proposition 2.1(2)]{ozkiirisci2013graded}, we obtain $V_G^R(I)=Spec_G(R)-\bigcup\limits_{i=1}^n D_{r_i}=\bigcap\limits_{i=1}^n V_G^R(Rr_i)=V_G^R(\sum\limits_{i=1}^n Rr_i)$. By Lemma \ref{Lemma 2.1}(3), we get $Gr(I)=Gr(\sum\limits_{i=1}^n Rr_i)$ and hence $I$ is an $RFG_g$-ideal. Therefore, $R$ has property $(RFG_g)$. Conversely, suppose that $R$ has property $(RFG_g)$ and let $U=Spec_G(R)-V_G^R(K)$ be any open set in $Spec_G(R)$, where $K\lhd_G R$. By Lemma \ref{Lemma 2.10}, it is sufficient to prove that $U$ is compact. Note that $Gr(K)=Gr(\sum\limits_{i=1}^m Rx_i)$ for some $x_1, x_2, ..., x_m\in h(R)$. Again, using Lemma \ref{Lemma 2.1}(3) and \cite[Proposition 2.1(2)]{ozkiirisci2013graded}, we get $V_G^R(K)=\bigcap\limits_{i=1}^m V_G^R(Rx_i)$. Thus $U=Spec_G(R)-\bigcap\limits_{i=1}^m V_G^R(Rx_i)=\bigcup\limits_{i=1}^m D_{x_i}$ is a finite union of compact sets in $Spec_G(R)$ and hence $U$ is compact, as needed.
\end{proof}
\begin{proposition} \label{Proposititon 2.12} Let $R$ be a $G$-graded ring and $\Upsilon=\{I\lhd_G R\,\mid\, I$ is not an $RFG_g$-ideal of $R$ $\}$. If $\Upsilon\neq \emptyset$, then $\Upsilon$ contains maximal elements with respect to inclusion, and any such maximal element is graded prime ideal of $R$.
\end{proposition}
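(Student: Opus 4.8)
The plan is to use a Zorn's Lemma argument to produce maximal elements of $\Upsilon$, and then to argue by contradiction that such a maximal element must be graded prime. First I would verify that $\Upsilon$ is closed under unions of chains. Given a chain $\{I_\lambda\}$ in $\Upsilon$, set $I = \bigcup_\lambda I_\lambda$; this is a graded ideal of $R$. If $I$ were an $RFG_g$-ideal, then by Proposition \ref{proposition 2.9}(2) we would have $Gr(I) = Gr(Rx_1 + \dots + Rx_n)$ for finitely many $x_i \in h(I)$. Each $x_i$ lies in some $I_{\lambda_i}$, and since the $I_\lambda$ form a chain, all of them lie in a single $I_\mu$. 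Then $Rx_1 + \dots + Rx_n \subseteq I_\mu \subseteq I$, so $Gr(Rx_1 + \dots + Rx_n) \subseteq Gr(I_\mu) \subseteq Gr(I) = Gr(Rx_1+\dots+Rx_n)$, forcing $Gr(I_\mu) = Gr(Rx_1+\dots+Rx_n)$, so $I_\mu$ is an $RFG_g$-ideal, contradicting $I_\mu \in \Upsilon$. Hence $I \in \Upsilon$, and Zorn's Lemma applies to give a maximal element.

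Now let $P$ be a maximal element of $\Upsilon$. I claim $P$ is graded prime. Since $P$ is not an $RFG_g$-ideal, $P \neq R$ (because $Gr(R) = R = Gr(R)$ with $R$ graded finitely generated by $1$). Suppose, for contradiction, that $P$ is not graded prime. Then there exist homogeneous elements $a, b \in h(R)$ with $ab \in P$ but $a \notin P$ and $b \notin P$. Consider the graded ideals $P + Ra$ and $(P : b) = \{r \in R \mid rb \in P\}$; both strictly contain $P$, so by maximality both are $RFG_g$-ideals. The key step is then to express $Gr(P)$ in terms of these two larger ideals and conclude that $P$ itself is an $RFG_g$-ideal. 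The standard identity to exploit here is $P = a(P:a) + \dots$; more precisely, one shows $P + Ra$ and $P + R(P:b)\,$—actually the cleaner route is: since $a(P:b) \subseteq P$ is not obviously useful, I would instead use that $P = (P + Ra)(P + (P:a))$ up to radical is false in general, so the right move is the classical one: show $Gr(P) = Gr\big((P+Ra) \cdot (P + Rb')\big)$ is not quite it either. Let me use the correct classical argument: set $Q_1 = P + Ra$ and $Q_2 = (P : a)$; both properly contain $P$ (the latter since $b \in Q_2 \setminus P$), hence both are $RFG_g$-ideals, so by Proposition \ref{proposition 2.9}(2), $Gr(Q_1) = Gr(J_1)$ and $Gr(Q_2) = Gr(J_2)$ for graded finitely generated $J_i$. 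Then $Q_1 Q_2 = (P+Ra)(P:a) \subseteq P$, and conversely $P \subseteq Q_1$, $P \subseteq Q_2$ give $P \subseteq Q_1 \cap Q_2$, but also $Q_1 Q_2 \supseteq aP + \dots$; the precise claim $Gr(P) = Gr(Q_1 Q_2) = Gr(J_1 J_2)$ needs checking: $Q_1 Q_2 \subseteq P$ gives $\subseteq$, and for $\supseteq$ one checks $P \subseteq Gr(Q_1 Q_2)$ by taking $p \in V_G^R(Q_1 Q_2) = V_G^R(Q_1) \cup V_G^R(Q_2)$ and noting $Q_1 \supseteq P$, $Q_2 \supseteq P$ both force $p \supseteq P$. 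Since $J_1 J_2$ is graded finitely generated (Proposition \ref{proposition 2.9}(1) style argument), $P$ is an $RFG_g$-ideal, contradicting $P \in \Upsilon$. Therefore $P$ is graded prime.

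The main obstacle I anticipate is pinning down the correct pair of ideals $Q_1, Q_2$ strictly containing $P$ whose product (or radical of product) recovers $Gr(P)$: the choice $Q_1 = P + Ra$, $Q_2 = (P:a)$ works precisely because $Q_1 Q_2 \subseteq P \subseteq Q_1 \cap Q_2$ and because $V_G^R(Q_1 Q_2) = V_G^R(Q_1) \cup V_G^R(Q_2)$ (using \cite[Proposition 2.1(3)]{ozkiirisci2013graded}), so taking graded radicals via Lemma \ref{Lemma 2.1}(2),(3) yields $Gr(P) = Gr(Q_1 Q_2)$. One must also confirm that $Q_2 = (P:a)$ is indeed a graded ideal (it is, since $P$ is graded and $a$ is homogeneous) and that it properly contains $P$ (it does: $b \in Q_2$ since $ab = ba \in P$, but $b \notin P$). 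Everything else is routine manipulation with graded radicals already established in Lemma \ref{Lemma 2.1} and Proposition \ref{proposition 2.9}.
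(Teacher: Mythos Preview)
Your proof is correct and follows essentially the same strategy as the paper's: Zorn's Lemma on $\Upsilon$ (with Proposition~\ref{proposition 2.9}(2) to push the finitely many homogeneous generators into a single member of the chain), then for primality produce two graded ideals strictly containing $P$ whose product lies in $P$, and apply Proposition~\ref{proposition 2.9}(1). The only cosmetic difference is in the choice of that pair: the paper takes graded ideals $A,B$ with $AB\subseteq P$, $A\nsubseteq P$, $B\nsubseteq P$ (via \cite[Proposition~1.2]{refai2000graded}) and uses $H=A+P$, $K=B+P$, concluding from $HK\subseteq P\subseteq H\cap K$ and $Gr(HK)=Gr(H\cap K)$; you instead pick homogeneous $a,b$ and use $Q_1=P+Ra$, $Q_2=(P:a)$, which works for the same reason since $Q_1Q_2\subseteq P\subseteq Q_1\cap Q_2$. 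Both routes land on $Gr(P)=Gr(\text{product})$ and finish identically.
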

\begin{proof} Order $\Upsilon$ by inclusion, i.e., for $I_1, I_2\in \Upsilon$, $I_1\leq I_2$ if $I_1\subseteq I_2$. It is clear that $(\Upsilon, \leq)$ is a partially ordered set. Let $C=\{I_{\alpha}\,\mid\, \alpha\in\Delta\}$ be any non-empty chain subset of $\Upsilon$ and let $J=\underset{\alpha\in\Delta}{\bigcup} I_{\alpha}$. Clearly, $J\lhd_G R$. Now, assume by way of contradiction that $J\notin\Upsilon$. Then $J$ is an $RFG_g$-ideal of $R$. By Proposition \ref{proposition 2.9}(2), $Gr(J)=Gr(Rr_1+Rr_2+...+Rr_n)$ for some $r_1, r_2, ..., r_n\in h(J)=J\cap h(R)$. Hence, for each $i=1,...,n$, there exists $\alpha_i\in \Delta$ such that $r_i\in I_{\alpha_i}$. It is clear that the non-empty totally ordered set $\{I_{\alpha_1}, I_{\alpha_2},..., I_{\alpha_n}\}$ has maximum element, $I_{\alpha_t}$ say. Then $I_{\alpha_1}, I_{\alpha_2},..., I_{\alpha_n}\subseteq I_{\alpha_t}$ and hence $r_1, r_2,..., r_n\in I_{\alpha_t}$. Thus $Rr_1+Rr_2+...+Rr_n\subseteq I_{\alpha_t}\subseteq J$ which follows that $Gr(Rr_1+Rr_2+...+Rr_n)\subseteq Gr(I_{\alpha_t})\subseteq Gr(J)=Gr(Rr_1+Rr_2+...+Rr_n)$. So $Gr(I_{\alpha_t})=Gr(Rr_1+Rr_2+...+Rr_n)$ and so $I_{\alpha_t}$ is an $RFG_g$-ideal of $R$, a contradiction. Therefore $J\in \Upsilon$. But $J$ is an upper bound for $\Upsilon$. So by Zorn’s lemma, $\Upsilon$ has a maximal element $I$, say. Now, we show that $I\in Spec_G(R)$. If not, then $\exists A, B\lhd_G R$ such that $AB\subseteq I$ but $A\nsubseteq I$ and $B\nsubseteq I$ by \cite[Proposition 1.2]{refai2000graded}. This implies that $I\varsubsetneq A+I$, $I\varsubsetneq B+I$ and $(A+I)(B+I)\subseteq I\subseteq (A+I)\cap (B+I)$. Let $H=A+I$ and $K=B+I$. Then $H, K\lhd_G R$ with $I\varsubsetneq H$, $I\varsubsetneq K$ and $HK\subseteq I\subseteq H\cap K$. Since $I$ is a maximal element of $\Upsilon$, then $H$ and $K$ are $RFG_g$-ideals of $R$. But $Gr(HK)\subseteq Gr(I)\subseteq Gr(H\cap K)=Gr(HK)$ which follows that $Gr(HK)=Gr(I)$. By Proposition \ref{proposition 2.9}(1), $HK$ is an $RFG_g$-ideal of $R$ and hence $Gr(I)=Gr(HK)=Gr(L)$ for some graded finitely generated ideal $L$ of $R$. This implies that $I$ is an $RFG_g$-ideal, a contradiction. Therefore $I\in Spec_G(R)$, as desired.
\end{proof}
The following is an easy result of Proposition \ref{Proposititon 2.12} and Theorem \ref{Theorem 2.11}.
\begin{corollary} \label{Corollary 2.13}
A $G$-graded ring $R$ has Noetherian graded prime spectrum if and only if every graded prime ideal of $R$ is an $RFG_g$-ideal.
\end{corollary}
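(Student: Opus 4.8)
The plan is to derive Corollary \ref{Corollary 2.13} by combining Theorem \ref{Theorem 2.11} with Proposition \ref{Proposititon 2.12}. Since Theorem \ref{Theorem 2.11} already equates ``$R$ has Noetherian graded prime spectrum'' with ``$R$ has property $(RFG_g)$'', it suffices to show that $R$ has property $(RFG_g)$ if and only if every graded prime ideal of $R$ is an $RFG_g$-ideal. One direction is immediate: if every graded ideal of $R$ is an $RFG_g$-ideal, then in particular every graded prime ideal is. The content is in the converse.

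For the converse, I would argue by contradiction. Suppose every graded prime ideal of $R$ is an $RFG_g$-ideal but $R$ does not have property $(RFG_g)$. Then the set $\Upsilon=\{I\lhd_G R\,\mid\, I \text{ is not an } RFG_g\text{-ideal of } R\}$ is nonempty. By Proposition \ref{Proposititon 2.12}, $\Upsilon$ has a maximal element $I$, and any such maximal element is a graded prime ideal of $R$. But then $I$ is a graded prime ideal that is not an $RFG_g$-ideal, contradicting the hypothesis. Hence $\Upsilon=\emptyset$, i.e., every graded ideal of $R$ is an $RFG_g$-ideal, so $R$ has property $(RFG_g)$.

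Finally I would invoke Theorem \ref{Theorem 2.11} to translate property $(RFG_g)$ back into the statement that $Spec_G(R)$ is a Noetherian space, closing the equivalence. Since both nontrivial implications are already packaged in the cited results, there is essentially no obstacle here; the only thing to be careful about is wording the contrapositive cleanly and making sure the ``only if'' direction (Noetherian $\Rightarrow$ every graded prime ideal is $RFG_g$) is explicitly noted as a special case of property $(RFG_g)$, which is exactly what Theorem \ref{Theorem 2.11} provides. This is why the corollary is stated as an ``easy consequence'' of the two preceding results.
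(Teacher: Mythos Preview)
Your proposal is correct and matches the paper's approach exactly: the paper states the corollary as an easy consequence of Proposition \ref{Proposititon 2.12} and Theorem \ref{Theorem 2.11}, and your argument fills in precisely those details.
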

\section{Graded Zariski socles of graded submodules}
\begin{definition}
\begin{enumerate}
\item A $G$-graded $R$-module $M$ is said to be graded secondful if the natural map $\phi:Spec_G^s(M)\rightarrow Spec_G(\overline{R})$ defined by $S\rightarrow \overline{Ann_R(S)}$ is surjective.
\item Let $M$ be a $G$-graded $R$-module and $N\leq_G M$. The graded Zariski socle of $N$, denoted by $Z.soc_G(N)$, is the sum of all members of $V_G^s(N)$, i.e. $Z.soc_G(N)=\underset{S\in V_G^s(N)}{\sum}S$. If $V_G^s(N)=\emptyset$, then $Z.soc_G(N)=0$. Moreover, we say that a graded submodule $N$ of $M$ is a graded Zariski socle submodule if $N=Z.soc_G(N)$.
\end{enumerate}
\end{definition}
The Purpose of this section is to give some properties of the graded secondful modules and the graded Zariski socle of graded submodules. The results in this section are important in the last section.

In part (a) of the next example, we see that every non-zero graded module over a graded field is graded secondful. However, if $M$ is a graded secondful module over a graded ring $R$, then $R$ is not necessary a graded field and this will be discussed in part (b).
\begin{example} (a) Let $F$ be a $G$-graded field and $M$ be a non-zero $G$-graded $F$-module. Since the only graded ideals of $F$ are $\{0\}$ and $F$, then it is easy to see that $Ann_F(M)=\{0\}$, $Spec_G(\overline{F})=\{\overline{\{0\}}\}$ and $\overline{Ann_F(S)}=\overline{\{0\}}$ for each $S\in Spec_G^s(M)$. Hence $M$ is a graded secondful module.\\
(b) Let $R=\mathbb{Z}$, $G=\mathbb{Z}_2$ and $M=\mathbb{Z}_2\times\mathbb{Z}_2$. Then $R$ is a $G$-graded ring by $R_0=R$ and $R_1=\{0\}$. Also, $M$ is a $G$-graded $R$-module by $M_0=\{0\}\times\mathbb{Z}_2$ and $M_1=\mathbb{Z}_2\times\{0\}$. By some computations, we can see that $Spec_G^s(M)=\{M, M_0, M_1\}$, $Ann_R(M)=2\mathbb{Z}$ and $Spec_G(\overline{R})=Spec_G(\mathbb{Z}/2\mathbb{Z})=\{\overline{2\mathbb{Z}}\}$. Since $\overline{2\mathbb{Z}}=\overline{Ann_R(M_0)}=\phi(M_0)$, then $M$ is graded seconful. But $R$ is not a $Z_2$-graded field.
\end{example}
A proper graded ideal $J$ of a $G$-graded ring $R$ is called graded maximal if whenever $H$ is a $G$-graded ideal of $R$ with $J\subseteq H\subseteq R$, then either $H=J$ or $H=R$. The set of all graded maximal ideals of $R$ will be denoted by $Max_G(R)$. The graded Jacobson radical of $R$, denoted by $J_G(R)$, is the intersection of all graded maximal ideals of $R$, see \cite{nastasescu2004methods}.
\begin{proposition}\label{Proposition 3.3}
Let $M$ be a $G$-graded secondful $R$-module. Then the following hold:
\begin{enumerate}
\item If $M\neq 0$ and $I$ is a graded radical ideal of $R$, then $Ann_R(Ann_M(I))=I\Leftrightarrow Ann_R(M)\subseteq I$.
\item If $p\in Max_G(R)$ such that $Ann_M(p)=0$, then $\exists x\in p\cap R_e$ such that $(1+x)M=0$.
\item If $I$ is a graded radical ideal of $R$ contained in the graded Jacobson radical $J_G(R)$ such that $Ann_M(I)=0$, then $M=0$.
\item If $I$ is a graded radical ideal of $R$ such that $I\subseteq J(R_e)$ and $Ann_M(I)=0$, then $M=0$. Here, $J(R_e)$ is the Jacobson radical of the ring $R_e$.
\end{enumerate}
\end{proposition}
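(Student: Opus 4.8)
The plan is to prove (1) first and then bootstrap (2), (3) and (4) from it; in every part the mechanism is the same, namely to use the surjectivity of $\phi$ to convert a graded prime ideal $\overline{p}$ of $\overline{R}$ into a graded second submodule $S$ of $M$ with $Ann_R(S)=p$. The step I expect to be most delicate is the reduction of (4) to (3), which hinges on showing that $p\cap R_e$ is a maximal ideal of $R_e$ for every graded maximal ideal $p$ of $R$ (so that $J(R_e)\subseteq J_G(R)$); everything else is essentially bookkeeping once the needed graded second submodules have been produced.

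For (1), the implication $(\Rightarrow)$ is immediate: $Ann_M(I)$ is a graded submodule of $M$ and $Ann_R$ reverses inclusions, so $Ann_R(M)\subseteq Ann_R(Ann_M(I))=I$. For $(\Leftarrow)$, assume $Ann_R(M)\subseteq I$. The inclusion $I\subseteq Ann_R(Ann_M(I))$ is clear from the definition of $Ann_M(I)$, so the work lies in the reverse inclusion. Here I would use that $I$ is graded radical: by Lemma \ref{Lemma 2.1}(2), $I=Gr(I)=\xi(V_G^R(I))=\bigcap_{p\in V_G^R(I)}p$, the case $I=R$ being handled directly since then $Ann_M(R)=0$. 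For each $p\in V_G^R(I)$ we have $Ann_R(M)\subseteq I\subseteq p$, hence $\overline{p}\in Spec_G(\overline{R})$, and secondfulness provides $S_p\in Spec_G^s(M)$ with $Ann_R(S_p)=p$; since $I\subseteq p=Ann_R(S_p)$ we get $S_p\subseteq Ann_M(I)$ and therefore $Ann_R(Ann_M(I))\subseteq Ann_R(S_p)=p$. Intersecting over all $p\in V_G^R(I)$ yields $Ann_R(Ann_M(I))\subseteq I$.

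For (2), I would first show $Ann_R(M)\not\subseteq p$: otherwise $\overline{p}$ is a graded maximal, hence graded prime, ideal of $\overline{R}$, so surjectivity of $\phi$ gives $S\in Spec_G^s(M)$ with $Ann_R(S)=p$, forcing the nonzero submodule $S$ into $Ann_M(p)=0$, a contradiction. Since $p$ is graded maximal and $Ann_R(M)$ is a graded ideal not contained in it, $Ann_R(M)+p=R$. Comparing degree-$e$ components of this sum of graded ideals gives $1\in(Ann_R(M)\cap R_e)+(p\cap R_e)$, say $1=u+v$ with $u\in Ann_R(M)\cap R_e$ and $v\in p\cap R_e$. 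Then $x:=-v\in p\cap R_e$ and $(1+x)M=uM=0$.

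For (3), fix $p\in Max_G(R)$; then $I\subseteq J_G(R)\subseteq p$ forces $Ann_M(p)\subseteq Ann_M(I)=0$, so (2) produces $x_p\in p\cap R_e$ with $(1+x_p)M=0$, whence $1+x_p\in Ann_R(M)$ but $1+x_p\notin p$, i.e. $Ann_R(M)\not\subseteq p$. Since this holds for every graded maximal ideal and, by a routine Zorn's lemma argument, every proper graded ideal lies in some graded maximal ideal, $Ann_R(M)$ cannot be proper, so $M=0$. For (4), I would deduce $J(R_e)\subseteq J_G(R)$ and then invoke (3). For $p\in Max_G(R)$ the graded ring $R/p$ has no nonzero proper graded ideals while $(R/p)_e=R_e/(p\cap R_e)\neq 0$; if $0\neq\bar a\in(R/p)_e$ then $(R/p)\bar a$ is a nonzero graded ideal, hence all of $R/p$, so $\bar a$ is invertible with its inverse lying in degree $e$. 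Thus $R_e/(p\cap R_e)$ is a field, i.e. $p\cap R_e\in Max(R_e)$, so $J(R_e)\subseteq p\cap R_e\subseteq p$; as $p$ was arbitrary, $I\subseteq J(R_e)\subseteq J_G(R)$ and (3) completes the argument.
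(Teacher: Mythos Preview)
Your proof is correct and follows essentially the same route as the paper. The only differences are in the wiring between parts: the paper obtains the contradictions in (2) and (3) by invoking (1) directly (for (3) it assumes $M\neq 0$, picks a single $p\in Max_G(R)$ containing $Ann_R(M)$, and uses (1) to get $p=Ann_R(Ann_M(p))=Ann_R(0)=R$), and for (4) it simply cites the identity $J(R_e)=J_G(R)\cap R_e$ from \cite[Corollary 2.9.3]{nastasescu2004methods} rather than proving $J(R_e)\subseteq J_G(R)$ by hand as you do.
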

\begin{proof} (1) Since $Ann_M(I)\subseteq M$, then $Ann_R(M)\subseteq Ann_R(Ann_M(I))=I$. Conversely, it is clear that $I\subseteq Ann_R(Ann_M(I))$. Now, let $p\in Spec_G(R)$ with $I\subseteq p$. Then $Ann_R(M)\subseteq p$. Since $M$ is graded secondful, then $\exists S\in Spec_G^s(M)$ such that $Ann_R(S)=p$ which follows that $Ann_M(Ann_R(S))\subseteq Ann_M(I)$. Thus $Ann_R(Ann_M(I))\subseteq Ann_R(Ann_M(Ann_R(S)))=Ann_R(S)=p$ and hence $Ann_R(Ann_M(I))\subseteq p$ for each $p\in V_G^R(I)$. By Lemma \ref{Lemma 2.1}(2), we have $Ann_R(Ann_M(I))\subseteq \xi(V_G^R(I))=Gr(I)=I$, as needed.\\
(2) Note that $p\subseteq Ann_R(M)+p\subseteq R$. If $Ann_R(M)+p=p$, then $Ann_R(M)\subseteq p$ and hence $p=Ann_R(Ann_M(p))=Ann_R(\{0_M\})=R$ which is a contradiction. So $Ann_R(M)+p=R$ and so $1=r+i$ for some $r\in Ann_R(M)$ and $i\in p$. Since $1\in R_e$, then $1=1_e=(r+i)_e=r_e+i_e$ which implies that $r_e=1-i_e\in Ann_R(M)$ as $Ann_R(M)\lhd_G R$. Take $x=-i_e\in p\cap R_e$. Then $(1+x)M=0$.\\
(3)  Assume by way of contradiction that $M\neq 0$. Then $Ann_R(M)\neq R$ and hence $\exists p\in Max_G(R)$ such that $Ann_R(M)\subseteq p$ by \cite[Proposition 1.4]{refai2000graded}. By hypothesis, $I\subseteq p$ and thus $Ann_M(p)\subseteq Ann_M(I)=0$ which follows that $Ann_R(Ann_M(p))=R$. Using (1), we have $p=Ann_R(Ann_M(p))=R$, a contradiction.\\
(4) By \cite[Corollary 2.9.3]{nastasescu2004methods}, $ J(R_e)=J_G(R)\cap R_e$ and thus $I\subseteq J_G(R)$. Now, the result follows from (3).
\end{proof}
Let $M$ be a $G$-graded $R$-module and $Y$ be a subset of $Spec_G^s(M)$. Then the sum of all members of $Y$ will be expressed by $T(Y)$. If $Y=\emptyset$, we set $T(Y)=0$. It is clear that $T(Y_1\cup Y_2)=T(Y_1)+T(Y_2)$ for any $Y_1, Y_2\subseteq Spec_G^s(M)$. Also, if $Y_1\subseteq Y_2$, then $T(Y_1)\subseteq T(Y_2)$.
\begin{proposition}\label{Proposition 3.4} Let $M$ be a $G$-graded $R$-module and $N\leq_G M$. Then:
\begin{enumerate}
\item $soc_G(N)=T(V_G^{s*}(N))$.
\item $Z.soc_G(N)=T(V_G^s(N))$.
\item If $Y\subseteq Spec_G^s(M)$, then $Cl(Y)=V_G^s(T(Y))$. Therefore, $V_G^s(T(V_G^s(N))=V_G^s(Z.soc_G(N))=V_G^s(N)$.
\end{enumerate}
\end{proposition}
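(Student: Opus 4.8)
The plan is to get (1) and (2) straight from the definitions and to spend the real effort on (3), after which the closing ``Therefore'' is just a specialization.

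For (1), recall that $soc_G(N)$ is by definition the sum of all graded second submodules of $M$ contained in $N$, while $V_G^{s*}(N)=\{S\in Spec_G^s(M)\mid S\subseteq N\}$ is exactly the collection of those submodules; hence $T(V_G^{s*}(N))$ --- the sum of the members of $V_G^{s*}(N)$ --- coincides with $soc_G(N)$, the empty case giving $\{0\}$ on both sides by the stated conventions. Part (2) is the same bookkeeping: $Z.soc_G(N)=\sum_{S\in V_G^s(N)}S=T(V_G^s(N))$ directly from the definition of $Z.soc_G$.

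For (3), first note that $T(Y)=\sum_{S\in Y}S$ is a graded submodule of $M$ (a sum of graded submodules), so $V_G^s(T(Y))$ is a genuine closed set of $Spec_G^s(M)$. I would prove $Cl(Y)=V_G^s(T(Y))$ in two steps. Inclusion ``$\subseteq$'': every $S\in Y$ satisfies $S\subseteq T(Y)$, hence $Ann_R(T(Y))\subseteq Ann_R(S)$, i.e. $S\in V_G^s(T(Y))$; thus $V_G^s(T(Y))$ is a closed set containing $Y$, so $Cl(Y)\subseteq V_G^s(T(Y))$. Inclusion ``$\supseteq$'': it suffices to show $V_G^s(T(Y))\subseteq V_G^s(N)$ for every $N\leq_G M$ with $Y\subseteq V_G^s(N)$. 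The key --- and the only computation worth isolating --- is the annihilator-of-a-sum identity $Ann_R(T(Y))=Ann_R(\sum_{S\in Y}S)=\bigcap_{S\in Y}Ann_R(S)$. Since $Y\subseteq V_G^s(N)$ means $Ann_R(N)\subseteq Ann_R(S)$ for all $S\in Y$, this identity gives $Ann_R(N)\subseteq Ann_R(T(Y))$; hence for any $S'\in V_G^s(T(Y))$ we get $Ann_R(N)\subseteq Ann_R(T(Y))\subseteq Ann_R(S')$, so $S'\in V_G^s(N)$. As $Cl(Y)$ is the intersection of all such $V_G^s(N)$, this yields $V_G^s(T(Y))\subseteq Cl(Y)$.

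For the final assertion, part (2) gives $V_G^s(T(V_G^s(N)))=V_G^s(Z.soc_G(N))$, and applying (3) with $Y=V_G^s(N)$, which is closed and therefore equal to its own closure, gives $V_G^s(T(V_G^s(N)))=Cl(V_G^s(N))=V_G^s(N)$. I do not anticipate a genuine obstacle; the only points needing care are the degenerate case $Y=\emptyset$ (then $T(Y)=\{0\}$, $Ann_R(\{0\})=R$, and $V_G^s(\{0\})=\emptyset=Cl(\emptyset)$ since every graded second submodule is nonzero) and keeping the inclusions in the annihilator identity correctly oriented.
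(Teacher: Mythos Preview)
Your proof is correct. The paper's own proof is minimal: it declares (1) and (2) ``clear'' from the definitions (exactly as you argue) and for (3) simply cites \cite[Proposition 4.1]{salam2022zariski} rather than giving the argument. Your direct two-inclusion proof of $Cl(Y)=V_G^s(T(Y))$ via the annihilator-of-a-sum identity is the natural argument one would expect behind that citation, so your write-up is a self-contained version of what the paper defers to an external reference; there is no substantive divergence in method.
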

\begin{proof} (1) and (2) are clear and (3) has been shown in \cite[Proposition 4.1]{salam2022zariski}.
\end{proof}
In the next lemma, we give some situations where $V_G^s(N)$ and $V_G^{s*}(N)$ for a graded submodule $N$ of a $G$-graded module $M$ coincide.
\begin{lemma}\label{Lemma 3.5} Let $M$ be a $G$-graded $R$-module. If $N\leq_G M$ and $I\lhd_G R$, then:
\begin{enumerate}
\item $V_G^s(Ann_M(I))=V_G^s(Ann_M(Gr(I)))=V_G^{s*}(Ann_M(I))=V_G^{s*}(Ann_M(Gr(I)))$.
\item $V_G^s(N)=V_G^s(Ann_M(Ann_R(N)))=V_G^s(Ann_M(Gr(Ann_R(N))))=V_G^{s*}(Ann_M(Ann_R(N)))=V_G^{s*}(Ann_M(Gr(Ann_R(N))))$.
\end{enumerate}
\end{lemma}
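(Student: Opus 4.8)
The plan is to prove part (1) first and then deduce part (2) as a quick corollary. The core observation is that for any graded second submodule $S$ of $M$, the condition ``$S \subseteq \mathrm{Ann}_M(I)$'' and the condition ``$\mathrm{Ann}_R(\mathrm{Ann}_M(I)) \subseteq \mathrm{Ann}_R(S)$'' are in fact equivalent, and moreover both are equivalent to ``$I \subseteq \mathrm{Ann}_R(S)$''. So first I would establish the chain of implications: if $S \in V_G^{s*}(\mathrm{Ann}_M(I))$, i.e. $S \subseteq \mathrm{Ann}_M(I)$, then $IS = 0$, hence $I \subseteq \mathrm{Ann}_R(S)$, hence $\mathrm{Ann}_M(\mathrm{Ann}_R(S)) \subseteq \mathrm{Ann}_M(I)$; combined with $S \subseteq \mathrm{Ann}_M(\mathrm{Ann}_R(S))$ this gives nothing new directly, but taking annihilators of $S \subseteq \mathrm{Ann}_M(I)$ yields $\mathrm{Ann}_R(\mathrm{Ann}_M(I)) \subseteq \mathrm{Ann}_R(S)$, i.e. $S \in V_G^s(\mathrm{Ann}_M(I))$. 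This shows $V_G^{s*}(\mathrm{Ann}_M(I)) \subseteq V_G^s(\mathrm{Ann}_M(I))$, which is the nontrivial inclusion.

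For the reverse inclusion $V_G^s(\mathrm{Ann}_M(I)) \subseteq V_G^{s*}(\mathrm{Ann}_M(I))$: suppose $S$ is graded second with $\mathrm{Ann}_R(\mathrm{Ann}_M(I)) \subseteq \mathrm{Ann}_R(S)$. Since trivially $I \subseteq \mathrm{Ann}_R(\mathrm{Ann}_M(I))$, we get $I \subseteq \mathrm{Ann}_R(S)$, so $IS = 0$, which means $S \subseteq \mathrm{Ann}_M(I)$, i.e. $S \in V_G^{s*}(\mathrm{Ann}_M(I))$. This handles the equality $V_G^s(\mathrm{Ann}_M(I)) = V_G^{s*}(\mathrm{Ann}_M(I))$. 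Next I would replace $I$ by $Gr(I)$ throughout and observe that $\mathrm{Ann}_M(I) = \mathrm{Ann}_M(Gr(I))$: indeed $\mathrm{Ann}_R(S)$ is a graded prime ideal whenever $S$ is graded second, and for a graded second $S$ we have $IS=0 \iff I \subseteq \mathrm{Ann}_R(S) \iff Gr(I) \subseteq \mathrm{Ann}_R(S)$ (using that a graded prime ideal is graded radical, i.e. $Gr(\mathrm{Ann}_R(S)) = \mathrm{Ann}_R(S)$ by \cite[Proposition 2.4(5)]{refai2000graded} and the fact that $Gr$ is a closure operator); hence $V_G^{s*}(\mathrm{Ann}_M(I)) = V_G^{s*}(\mathrm{Ann}_M(Gr(I)))$, and similarly for $V_G^s$. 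Chaining these equalities gives part (1).

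For part (2), apply part (1) with $I = \mathrm{Ann}_R(N)$: this immediately yields
$$
V_G^s(\mathrm{Ann}_M(\mathrm{Ann}_R(N))) = V_G^s(\mathrm{Ann}_M(Gr(\mathrm{Ann}_R(N)))) = V_G^{s*}(\mathrm{Ann}_M(\mathrm{Ann}_R(N))) = V_G^{s*}(\mathrm{Ann}_M(Gr(\mathrm{Ann}_R(N)))).
$$
It then remains to show $V_G^s(N) = V_G^s(\mathrm{Ann}_M(\mathrm{Ann}_R(N)))$. For this, note $V_G^s(N)$ depends only on $\mathrm{Ann}_R(N)$ by definition, and $\mathrm{Ann}_R(\mathrm{Ann}_M(\mathrm{Ann}_R(N))) \supseteq \mathrm{Ann}_R(N)$ always; conversely $N \subseteq \mathrm{Ann}_M(\mathrm{Ann}_R(N))$ gives $\mathrm{Ann}_R(\mathrm{Ann}_M(\mathrm{Ann}_R(N))) \subseteq \mathrm{Ann}_R(N)$, so the two annihilators coincide and hence $V_G^s(N) = V_G^s(\mathrm{Ann}_M(\mathrm{Ann}_R(N)))$.

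The main obstacle I anticipate is bookkeeping rather than conceptual: one must be careful that $V_G^s(\cdot)$ is defined via the annihilator condition $\mathrm{Ann}_R(N) \subseteq \mathrm{Ann}_R(S)$ (not containment of submodules), so every step translating between ``$S \subseteq$ something'' and ``$\mathrm{Ann}_R(\text{something}) \subseteq \mathrm{Ann}_R(S)$'' needs the graded-primeness of $\mathrm{Ann}_R(S)$ to push $Gr$ inside, and one must keep straight that the equality $\mathrm{Ann}_M(I) = \mathrm{Ann}_M(Gr(I))$ need not hold as submodules in general but does hold \emph{after} intersecting with the graded second submodules — which is exactly what $V_G^{s*}$ records. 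Writing the argument as a cycle of inclusions $V_G^{s*}(\mathrm{Ann}_M(I)) \subseteq V_G^s(\mathrm{Ann}_M(I)) \subseteq V_G^{s*}(\mathrm{Ann}_M(Gr(I))) \subseteq \cdots$ and closing the loop is the cleanest route.
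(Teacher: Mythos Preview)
Your proposal is correct and follows essentially the same route as the paper's proof: first establish $V_G^s(\mathrm{Ann}_M(J))=V_G^{s*}(\mathrm{Ann}_M(J))$ for any graded ideal $J$, then use primeness of $\mathrm{Ann}_R(S)$ to pass between $I$ and $Gr(I)$, and finally deduce part (2) from part (1) via the identity $\mathrm{Ann}_R(\mathrm{Ann}_M(\mathrm{Ann}_R(N)))=\mathrm{Ann}_R(N)$. The one slip to tidy up is the sentence ``observe that $\mathrm{Ann}_M(I)=\mathrm{Ann}_M(Gr(I))$'', which is not true in general (and which you yourself flag later); what your argument actually shows, and all that is needed, is the equality $V_G^{s*}(\mathrm{Ann}_M(I))=V_G^{s*}(\mathrm{Ann}_M(Gr(I)))$.
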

\begin{proof} (1) It is straightforward to see that $V_G^s(Ann_M(J))=V_G^{s*}(Ann_M(J))$ for any $J\lhd_G R$. So it is enough to show that $V_G^{s*}(Ann_M(I))=V_G^{s*}(Ann_M(Gr(I))$. Since $I\subseteq Gr(I)$, then $Ann_M(Gr(I))\subseteq Ann_M(I)$ and hence $V_G^{s*}(Ann_M(Gr(I)))\subseteq V_G^{s*}(Ann_M(I))$. For the reverse inclusion, let $S\in V_G^{s*}(Ann_M(I))$. Then $S\subseteq Ann_M(I)$ and thus $I\subseteq Ann_R(S)$ which follows that $Gr(I)\subseteq Gr(Ann_R(S))=Ann_R(S)$ as $Ann_R(S)\in Spec_G(R)$. This implies that $S\subseteq Ann_M(Ann_R(S))\subseteq Ann_M(Gr(I))$. Therefore $S\in V_G^{s*}(Ann_M(Gr(I)))$, as desired. \\
(2) For any $S\in Spec_G^s(M)$, we have $S\in V_G^s(N)\Leftrightarrow Ann_R(Ann_M(Ann_R(N)))=Ann_R(N)\subseteq Ann_R(S)\Leftrightarrow S\in V_G^s(Ann_M(Ann_R(N)))$. This means that $V_G^s(N)=V_G^s(Ann_M(Ann_R(N)))$. Now, the result follows from (1).
\end{proof}
A $G$-graded $R$-module $M$ is said to be a comultiplication graded $R$-module if for every $N\leq_G M$, we have $N=Ann_M(I)$ for some $I\lhd_G R$ (see, for example, \cite{ansari2011graded,al2017some,abu2012comultiplication}). By \cite[Lemma 3.2]{abu2012comultiplication}, if $N$ is a graded submodule of a comultiplication graded $R$-module $M$, then $N=Ann_M(Ann_R(N))$.

In the next proposition, we compare the graded Zariski socle and the graded second socle of graded submodules. The proof of the proposition can be easily checked by using Proposition \ref{Proposition 3.4} and Lemma \ref{Lemma 3.5}.
\begin{proposition} \label{Proposition 3.6} Let $M$ be a $G$-graded $R$-module. Let $N, N^\prime\leq_G M$ and $I\lhd_G R$. Then the following hold:
\begin{enumerate}
\item $Z.soc_G(Ann_M(I))=Z.soc_G(Ann_M(Gr(I)))=soc_G(Ann_M(I))=soc_G(Ann_M(Gr(I)))$.
\item $Z.soc_G(N)=Z.soc_G(Ann_M(Ann_R(N)))=Z.soc_G(Ann_M(Gr(Ann_R(N))))=soc_G(Ann_M(Ann_R(N)))=soc_G(Ann_M(Gr(Ann_R(N))))$.
\item $soc_G(N)\subseteq Z.soc_G(N)$. Moreover, if $M$ is a comultiplication $G$-graded $R$-module, then the equality holds.
\item If $V_G^s(N)\subseteq V_G^s(N^\prime)$, then $Z.soc_G(N)\subseteq Z.soc_G(N^\prime)$.
\item $V_G^s(N)=V_G^s(N^\prime)\Leftrightarrow Z.soc_G(N)=Z.soc_G(N^\prime)$.
\end{enumerate}
\end{proposition}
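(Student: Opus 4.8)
The plan is to verify each of the five claims of Proposition \ref{Proposition 3.6} by reducing everything to the corresponding statements about the sets $V_G^s$ and $V_G^{s*}$, which were already established in Lemma \ref{Lemma 3.5}, together with the identities $soc_G(N)=T(V_G^{s*}(N))$ and $Z.soc_G(N)=T(V_G^s(N))$ from Proposition \ref{Proposition 3.4}, and the obvious monotonicity of $T$. The key observation underlying the whole proof is that applying the operator $T$ to an equality (or inclusion) of subsets of $Spec_G^s(M)$ immediately yields an equality (or inclusion) of the corresponding sums of graded second submodules; so the content is entirely contained in Lemma \ref{Lemma 3.5}, and no genuinely new argument is needed.

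For part (1) I would start from the chain of equalities $V_G^s(Ann_M(I))=V_G^s(Ann_M(Gr(I)))=V_G^{s*}(Ann_M(I))=V_G^{s*}(Ann_M(Gr(I)))$ furnished by Lemma \ref{Lemma 3.5}(1), apply $T$ throughout, and read off that $Z.soc_G(Ann_M(I))$, $Z.soc_G(Ann_M(Gr(I)))$, $soc_G(Ann_M(I))$ and $soc_G(Ann_M(Gr(I)))$ all coincide, using Proposition \ref{Proposition 3.4}(1)--(2). Part (2) is the same maneuver applied to the longer chain in Lemma \ref{Lemma 3.5}(2). For part (3), the inclusion $V_G^{s*}(N)\subseteq V_G^s(N)$ (immediate from the definitions, since $S\subseteq N$ forces $Ann_R(N)\subseteq Ann_R(S)$) gives $soc_G(N)=T(V_G^{s*}(N))\subseteq T(V_G^s(N))=Z.soc_G(N)$ by monotonicity of $T$; and if $M$ is comultiplication then $N=Ann_M(Ann_R(N))$ by \cite[Lemma 3.2]{abu2012comultiplication}, so part (2) with this substitution collapses $soc_G(N)$ and $Z.soc_G(N)$ to the same submodule. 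Parts (4) and (5) are then direct: if $V_G^s(N)\subseteq V_G^s(N')$ then $Z.soc_G(N)=T(V_G^s(N))\subseteq T(V_G^s(N'))=Z.soc_G(N')$, giving (4); the forward direction of (5) is (4) applied in both directions, while the reverse direction of (5) follows because $V_G^s(Z.soc_G(N))=V_G^s(N)$ by Proposition \ref{Proposition 3.4}(3), so $Z.soc_G(N)=Z.soc_G(N')$ yields $V_G^s(N)=V_G^s(Z.soc_G(N))=V_G^s(Z.soc_G(N'))=V_G^s(N')$.

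There is essentially no obstacle here; the only point requiring the slightest care is making sure the ``$\Leftarrow$'' direction of part (5) really does use Proposition \ref{Proposition 3.4}(3) rather than trying to invert $T$ directly (which is not legitimate, since $T$ is not injective on arbitrary subsets). Everything else is a mechanical transcription of Lemma \ref{Lemma 3.5} through the operator $T$. Given that all the hard topological and radical-ideal input was already assembled in Section 2 and in Lemma \ref{Lemma 3.5}, I expect the write-up to be short, and indeed the excerpt itself signals this by saying the proposition ``can be easily checked.''
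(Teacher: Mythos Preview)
Your proposal is correct and matches the paper's intended approach exactly: the paper itself does not write out a proof but simply notes that the proposition ``can be easily checked by using Proposition \ref{Proposition 3.4} and Lemma \ref{Lemma 3.5},'' and your argument is precisely the natural unpacking of that remark, including the use of \cite[Lemma 3.2]{abu2012comultiplication} (stated just before the proposition) for the comultiplication case in part (3) and of Proposition \ref{Proposition 3.4}(3) for the reverse implication in part (5).
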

The following example shows that the reverse inclusion in Proposition 3.6(3) is not true in general.
\begin{example} Let $F=\mathbb{R}$ (The field of real numbers), $G=\mathbb{Z}_2$ and $M=\mathbb{R}\times\mathbb{R}$. Then $F$ is a $G$-graded field by $F_0=F$ and $F_1=\{0\}$. Also $M$ is a $G$-graded $F$-module by $M_0=\mathbb{R}\times \{0\}$ and $M_1=\{0\}\times \mathbb{R}$. Clearly, $M_0\leq_G M$. By \cite[Lemma 2.8(1)]{salam2022zariski}, every non-zero graded submodule of a graded module over a graded field is graded second, which follows that $M_0\in Spec_G^s(M)$ and hence $soc_G(M_0)=M_0$. Since $M_0\neq 0$, then $Ann_F(M_0)\neq F$ and thus $Ann_F(M_0)=0$. So we get $V_G^s(M_0)=Spec_G^s(M)$. Again, by \cite[Lemma 2.8(1)]{salam2022zariski}, $M\in Spec_G^s(M)$. Consequently, $Z.soc_G(M_0)=T(V_G^s(M_0))=T(Spec_G^s(M))=M\neq M_0$.
\end{example}
In the following proposition, we give some more properties of both $V_G^s(N)$ and $Z.soc_G(N)$ for $N\leq_G M$.
\begin{proposition} \label{Proposition 3.8} Suppose that $N$ and $N^\prime$ are graded submodules of a $G$-graded $R$-module $M$. Then the following hold:
\begin{enumerate}
\item[(a)] $Z.soc_G(0)=0$.
\item[(b)] If $N\subseteq N^\prime$, then $Z.soc_G(N)\subseteq Z.soc_G(N^\prime)$.
\item[(c)] $Z.soc_G(Z.soc_G(N))=Z.soc_G(N)$.
\item[(d)] $Z.soc_G(N+N^\prime)=Z.soc_G(N)+Z.soc_G(N^\prime)$.
\item[(e)] If $M$ is graded secondful, then $N\neq 0\Leftrightarrow V_G^s(N)\neq \emptyset \Leftrightarrow Z.soc_G(N)\neq 0$.
\item[(f)] $Gr(Ann_R(N))\subseteq Ann_R(Z.soc_G(N))$. If $M$ is graded secondful, then the equality holds.
\end{enumerate}
\end{proposition}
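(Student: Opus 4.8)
The six assertions are essentially formal consequences of the identifications $Z.soc_G(N)=T(V_G^s(N))$ and $V_G^s(Z.soc_G(N))=V_G^s(N)$ (Proposition \ref{Proposition 3.4}(2),(3)), together with the fact that $Ann_R(S)\in Spec_G(R)$ for every $S\in Spec_G^s(M)$. First I would dispose of (a): since $Ann_R(0)=R$, any $S\in V_G^s(0)$ would satisfy $R\subseteq Ann_R(S)$, forcing $S=1\cdot S=0$, contrary to a graded second submodule being nonzero; hence $V_G^s(0)=\emptyset$ and $Z.soc_G(0)=0$. The same nonzeroness observation shows, for any $N$, that $V_G^s(N)\neq\emptyset\Leftrightarrow Z.soc_G(N)\neq 0$, a fact I will reuse in (e). For (b), $N\subseteq N'$ gives $Ann_R(N')\subseteq Ann_R(N)$, hence $V_G^s(N)\subseteq V_G^s(N')$, and Proposition \ref{Proposition 3.6}(4) yields $Z.soc_G(N)\subseteq Z.soc_G(N')$. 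Part (c) then follows from $V_G^s(Z.soc_G(N))=V_G^s(N)$ (Proposition \ref{Proposition 3.4}(3)) and Proposition \ref{Proposition 3.6}(5).

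For (d), the inclusion $Z.soc_G(N)+Z.soc_G(N')\subseteq Z.soc_G(N+N')$ is immediate from (b). For the reverse I would show $V_G^s(N+N')\subseteq V_G^s(N)\cup V_G^s(N')$: if $S\in V_G^s(N+N')$ then $Ann_R(N)\,Ann_R(N')\subseteq Ann_R(N)\cap Ann_R(N')=Ann_R(N+N')\subseteq Ann_R(S)$, and since $Ann_R(S)$ is graded prime, \cite[Proposition 1.2]{refai2000graded} gives $Ann_R(N)\subseteq Ann_R(S)$ or $Ann_R(N')\subseteq Ann_R(S)$, i.e. $S\in V_G^s(N)\cup V_G^s(N')$. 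Combined with (b) this gives $V_G^s(N+N')=V_G^s(N)\cup V_G^s(N')$, and since $T(-)$ is additive over unions, $Z.soc_G(N+N')=T(V_G^s(N))+T(V_G^s(N'))=Z.soc_G(N)+Z.soc_G(N')$.

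For (e), by (a) and the observation in the first paragraph it remains only to prove $N\neq 0\Rightarrow V_G^s(N)\neq\emptyset$ when $M$ is graded secondful. If $N\neq 0$ then $Ann_R(N)\neq R$, so by \cite[Proposition 1.4]{refai2000graded} there is $p\in Max_G(R)\subseteq Spec_G(R)$ with $Ann_R(N)\subseteq p$; as $Ann_R(M)\subseteq Ann_R(N)\subseteq p$ we get $\overline{p}\in Spec_G(\overline{R})$, and surjectivity of $\phi$ produces $S\in Spec_G^s(M)$ with $Ann_R(S)=p\supseteq Ann_R(N)$, i.e. $S\in V_G^s(N)$. For (f), the inclusion $Gr(Ann_R(N))\subseteq Ann_R(Z.soc_G(N))$ holds because for each $S\in V_G^s(N)$ we have $Ann_R(N)\subseteq Ann_R(S)$, hence $Gr(Ann_R(N))\subseteq Gr(Ann_R(S))=Ann_R(S)$, while $Ann_R(Z.soc_G(N))=Ann_R\bigl(\sum_{S\in V_G^s(N)}S\bigr)=\bigcap_{S\in V_G^s(N)}Ann_R(S)$ (this reads as $R$ when $V_G^s(N)=\emptyset$). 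When $M$ is graded secondful: if $N=0$ both sides equal $R$ by (a); if $N\neq 0$, surjectivity of $\phi$ (as in (e)) shows $\{Ann_R(S):S\in V_G^s(N)\}$ is exactly $V_G^R(Ann_R(N))$, so by Lemma \ref{Lemma 2.1}(2) we get $Ann_R(Z.soc_G(N))=\bigcap_{p\in V_G^R(Ann_R(N))}p=\xi(V_G^R(Ann_R(N)))=Gr(Ann_R(N))$.

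Nothing here is genuinely deep; the only points requiring a moment's care are, in (d), passing from $Ann_R(N+N')$ to the product $Ann_R(N)\,Ann_R(N')$ so that graded primeness of $Ann_R(S)$ can be invoked, and, in the equality half of (f), the identification $\{Ann_R(S):S\in V_G^s(N)\}=V_G^R(Ann_R(N))$ — this is precisely where graded secondfulness (surjectivity of $\phi$) enters, and it must be married with Lemma \ref{Lemma 2.1}(2) to rewrite the intersection of primes as a graded radical.
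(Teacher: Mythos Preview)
Your proof is correct and follows essentially the same route as the paper's. The only cosmetic differences are that in (d) you reprove the identity $V_G^s(N+N')=V_G^s(N)\cup V_G^s(N')$ directly whereas the paper cites it from \cite[Theorem 2.16(3)]{salam2022zariski}, and in the equality half of (f) you phrase the argument as the set identification $\{Ann_R(S):S\in V_G^s(N)\}=V_G^R(Ann_R(N))$ rather than checking containment of $Ann_R(Z.soc_G(N))$ in each $p\in V_G^R(Ann_R(N))$; the substance is identical.
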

\begin{proof} (a) $Z.soc_G(0)=T(V_G^s(0))=T(\emptyset)=0$.\\
(b) Since $N\subseteq N^\prime$, then $Ann_R(N)\subseteq Ann_R(N^\prime)$ and hence $V_G^s(N)\subseteq V_G^s(N^\prime)$. By Proposition \ref{Proposition 3.6}(4), we get $Z.soc_G(N)\subseteq Z.soc_G(N^\prime)$.\\
(c) By Lemma \ref{Lemma 3.5}(3), we have $V_G^s(Z.soc_G(N))=V_G^s(N)$. So $Z.soc_G(Z.soc_G(N))=Z.soc_G(N)$ by Proposition \ref{Proposition 3.6}(5).\\
(d) By \cite[Theorem 2.16(3)]{salam2022zariski}, we have $V_G^s(N+N^\prime)=V_G^s(N)\cup V_G^s(N^\prime)$ which follows that $Z.soc_G(N+N^\prime)=T(V_G^s(N+N^\prime))=T(V_G^s(N)\cup V_G^s(N^\prime))=T(V_G^s(N))+T(V_G^s(N^\prime))=Z.soc_G(N)+Z.soc_G(N^\prime)$.\\
(e) By part (a) and Proposition \ref{Proposition 3.6}(5), $V_G^s(N)\neq \emptyset\Leftrightarrow V_G^s(N)\neq V_G^s(0)\Leftrightarrow Z.soc_G(N)\neq Z.soc_G(0)=0$. Now, it is clear that $V_G^s(N)\neq \emptyset\Rightarrow N\neq 0$ and it remains to prove the converse. So suppose that $N\neq 0$. Then $Ann_R(N)\neq R$. By \cite[Proposition 1.4]{refai2000graded}, we obtain $Ann_R(N)\subseteq p$ for some $p\in Max_G(R)\subseteq Spec_G(R)$. By hypothesis, $\exists S\in Spec_G^s(M)$ such that $Ann_R(S)=p$ which follows that $Ann_R(N)\subseteq Ann_R(S)$, i.e. $S\in V_G^s(N)$.\\
(f) By Proposition \ref{Proposition 3.6}(2), $Z.soc_G(N)=Soc_G(Ann_M(Gr(Ann_R(N))))\subseteq Ann_M(Gr(Ann_R(N)))$ and thus $Gr(Ann_R(N))\subseteq Ann_R(Z.soc_G(N))$. Now, suppose that $M$ is graded secondful and we show that $Ann_R(Z.soc_G(N))\subseteq Gr(Ann_R(N))$. If $V_G^s(N)=\emptyset$, then $N=0$ by (e). Hence $Gr(Ann_R(N))=Gr(R)=R$  and the result is trivially true. So suppose that $V_G^s(N)\neq \emptyset$. So $N\neq 0$ and so $Ann_R(N)\neq R$ which follows that $V_G^R(Ann_R(N))\neq \emptyset$. So let $p\in V_G^R(Ann_R(N))$. Then $\exists S\in Spec_G^s(M)$ such that $Ann_R(S)=p$ and hence $Ann_R(N)\subseteq Ann_R(S)$. This implies that $Ann_R(Z.soc_G(N))=Ann_R(\underset{H\in V_G^s(N)}{\sum}H)=\underset{H\in V_G^s(N)}{\bigcap}Ann_R(H)\subseteq Ann_R(S)=p$. Therefore, $Ann_R(Z.soc_G(N))\subseteq p$ for each $p\in V_G^R(Ann_R(N))$. By Lemma \ref{Lemma 2.1}(2), we have $Ann_R(Z.soc_G(N))\subseteq \xi(V_G^R(Ann_R(N)))=Gr(Ann_R(N))$ which completes the proof.
\end{proof}
\section{Graded modules with Noetherian graded second spectrum}
Let $M$ be a $G$-graded $R$-module. In this section, we investigate $Spec_G^s(M)$ with respect to Zariski topology from the viewpoint of being a Noetherian space.
\begin{theorem} \label{Theorem 4.1}
A $G$-graded $R$-module $M$ has Noetherian graded second spectrum if and only if the descending chain condition for graded Zariski socle submodules of $M$ hold.
\end{theorem}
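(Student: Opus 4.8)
The plan is to reduce the topological statement about $Spec_G^s(M)$ to a module-theoretic one via the dictionary between closed subsets of $Spec_G^s(M)$ and graded Zariski socle submodules of $M$. Recall that the closed subsets of $Spec_G^s(M)$ in the Zariski topology are precisely the sets $V_G^s(N)$ with $N\leq_G M$; hence, by the definition of a Noetherian space, $M$ has Noetherian graded second spectrum exactly when every descending chain $V_G^s(N_1)\supseteq V_G^s(N_2)\supseteq\cdots$ of such sets terminates. The tools linking the two sides are already available: $Z.soc_G(N)=T(V_G^s(N))$ (Proposition \ref{Proposition 3.4}(2)); $V_G^s(N)=V_G^s(N')$ if and only if $Z.soc_G(N)=Z.soc_G(N')$, and $V_G^s(N)\subseteq V_G^s(N')$ implies $Z.soc_G(N)\subseteq Z.soc_G(N')$ (Proposition \ref{Proposition 3.6}(5),(4)); and $Z.soc_G(Z.soc_G(N))=Z.soc_G(N)$, so the graded Zariski socle of any submodule is itself a graded Zariski socle submodule (Proposition \ref{Proposition 3.8}(c)).

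For the forward implication I would take a descending chain $L_1\supseteq L_2\supseteq\cdots$ of graded Zariski socle submodules, so $L_i=Z.soc_G(L_i)$ for every $i$. Directly from the definition of $V_G^s$ via annihilators, the inclusion $L_{i+1}\subseteq L_i$ forces $Ann_R(L_i)\subseteq Ann_R(L_{i+1})$ and hence $V_G^s(L_{i+1})\subseteq V_G^s(L_i)$, so we obtain a descending chain of closed subsets of $Spec_G^s(M)$. Noetherianity produces $k$ with $V_G^s(L_k)=V_G^s(L_n)$ for all $n\geq k$; then Proposition \ref{Proposition 3.6}(5) gives $L_n=Z.soc_G(L_n)=Z.soc_G(L_k)=L_k$ for all $n\geq k$, so the original chain stabilizes.

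For the converse I would start with an arbitrary descending chain $V_G^s(N_1)\supseteq V_G^s(N_2)\supseteq\cdots$ of closed subsets. One cannot assume the chosen representatives $N_i$ form a chain, but Proposition \ref{Proposition 3.6}(4) still yields a descending chain $Z.soc_G(N_1)\supseteq Z.soc_G(N_2)\supseteq\cdots$, which by Proposition \ref{Proposition 3.8}(c) is a chain of graded Zariski socle submodules. By hypothesis it stabilizes: $Z.soc_G(N_k)=Z.soc_G(N_n)$ for all $n\geq k$. Applying Proposition \ref{Proposition 3.6}(5) once more gives $V_G^s(N_k)=V_G^s(N_n)$ for all $n\geq k$, so the chain of closed subsets terminates and $Spec_G^s(M)$ is Noetherian.

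I do not expect a genuine obstacle here: once the correspondence $N\mapsto V_G^s(N)$ and $Y\mapsto T(Y)$ is in place, both directions are essentially bookkeeping with the results of Section 3. The single point that needs a little care is the converse, where the submodules $N_i$ representing the closed sets need not decrease; replacing each $N_i$ by $Z.soc_G(N_i)$ repairs this, and the idempotency in Proposition \ref{Proposition 3.8}(c) is exactly what certifies that the repaired chain lies among the graded Zariski socle submodules, so that the chain condition in the hypothesis is applicable.
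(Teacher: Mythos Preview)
Your proposal is correct and follows essentially the same argument as the paper's proof: both directions use the correspondence between closed sets $V_G^s(N)$ and graded Zariski socle submodules via Propositions \ref{Proposition 3.6}(4),(5) and \ref{Proposition 3.8}(c), with the converse handled by replacing the representatives $N_i$ by $Z.soc_G(N_i)$ exactly as you do. The only cosmetic difference is that the paper phrases the converse via $T(V_G^s(N_i))$ and Proposition \ref{Proposition 3.4}(2) rather than invoking Proposition \ref{Proposition 3.6}(4) directly, which amounts to the same thing.
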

\begin{proof} Suppose that $Spec_G^s(M)$ is a Noetherian space and let $N_1\supseteq N_2\supseteq...$ be a descending chain of graded Zariski socle submodules of $M$, where $N_i\leq_G M$. Then $V_G^s(N_1)\supseteq V_G^s(N_2)\supseteq...$ is a descending chain of closed sets in $Spec_G^s(M)$. So $\exists k\in \mathbb{Z}^{+}$ such that $V_G^s(N_i)=V_G^s(N_k)$ for each $i\geq k$. By Proposition \ref{Proposition 3.6}(5), we have $N_k=Z.soc_G(N_k)=Z.soc_G(N_i)=N_i$ for each $i\geq k$. Conversely, suppose that the descending chain condition for graded Zariski socle submodules of $M$ hold and let $V_G^s(N_1)\supseteq V_G^s(N_2)\supseteq ...$ be a descending chain of closed sets in $Spec_G^s(M)$, where $N_i\leq_G M$. Then $T(V_G^s(N_1))\supseteq T(V_G^s(N_2))\supseteq ...$. By Proposition \ref{Proposition 3.4}(2) and Proposition \ref{Proposition 3.8}(c), we have $Z.soc_G(N_1)\supseteq Z.soc_G(N_2)\supseteq ...$ which is a descending chain of graded Zariski socle submodules of $M$. Therefore, $\exists k\in \mathbb{Z}^{+}$ such that $Z.soc_G(N_k)=Z.soc_G(N_i)$ for each $i\geq k$. Consequently, $V_G^s(N_k)=V_G^s(N_i)$ for each $i\geq k$ by Proposition \ref{Proposition 3.6}(5) again.
\end{proof}
The proof of the following result is clear by Theorem \ref{Theorem 4.1}.
\begin{corollary}
Every graded Artinian module has Noetherian graded second spectrum.
\end{corollary}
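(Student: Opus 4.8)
The plan is to derive the corollary directly from Theorem \ref{Theorem 4.1}, which asserts that $Spec_G^s(M)$ is a Noetherian space precisely when the descending chain condition holds for graded Zariski socle submodules of $M$. So it suffices to show that every graded Artinian $R$-module $M$ satisfies this descending chain condition.

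First I would recall that by definition a graded Artinian module satisfies the descending chain condition on \emph{all} graded submodules of $M$. Next, I observe that each graded Zariski socle submodule is in particular a graded submodule of $M$: indeed, for $N\leq_G M$ the object $Z.soc_G(N)=\sum_{S\in V_G^s(N)} S$ is a sum of graded submodules (the graded second submodules $S$ belonging to $V_G^s(N)$) and hence is itself a graded submodule of $M$. Therefore any descending chain $N_1\supseteq N_2\supseteq\cdots$ of graded Zariski socle submodules is a fortiori a descending chain of graded submodules of $M$, which must stabilize because $M$ is graded Artinian. This establishes the descending chain condition for graded Zariski socle submodules, and then Theorem \ref{Theorem 4.1} immediately gives that $Spec_G^s(M)$ is Noetherian.

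There is essentially no obstacle here; the only point worth stating carefully is that a graded Zariski socle submodule really is a graded submodule (so that the general Artinian hypothesis applies to it), which is immediate from the definition as a sum of graded second submodules. One could phrase the whole argument in a single sentence: graded Artinian $\Rightarrow$ DCC on graded submodules $\Rightarrow$ DCC on the subclass of graded Zariski socle submodules $\Rightarrow$ (by Theorem \ref{Theorem 4.1}) $Spec_G^s(M)$ is Noetherian.
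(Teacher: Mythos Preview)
Your proposal is correct and follows exactly the route the paper intends: the paper itself simply states that the corollary is clear from Theorem \ref{Theorem 4.1}, and you have spelled out the one-line observation that the graded Artinian condition (DCC on all graded submodules) specializes to DCC on graded Zariski socle submodules.
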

In the following lemma, we recall some properties of the natural map $\phi$ of $Spec_G^s(M)$. These properties are important in the rest of this section.
\begin{lemma} (\cite[Proposition 3.1 and Theorem 3.7]{salam2022zariski}) \label{Lemma 4.3}
Let $M$ be a $G$-graded $R$-module. Then the following hold:
\begin{enumerate}
\item $\phi$ is continuous and $\phi^{-1}(V_G^{\overline{R}}(\overline{I}))=V_G^s(Ann_M(I))$ for every graded ideal $I$ of $R$ containing $Ann_R(M)$.
\item If $M$ is graded secondful, then $\phi$ is closed with $\phi(V_G^s(N))=V_G^{\overline{R}}(\overline{Ann_R(N)})$ for any $N\leq_G M$.
\end{enumerate}
\end{lemma}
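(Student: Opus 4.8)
The plan is to prove each of the two statements by unwinding the definitions of the natural map $\phi$ and the closed sets $V_G^s(\cdot)$ on $Spec_G^s(M)$ and $V_G^{\overline{R}}(\cdot)$ on $Spec_G(\overline{R})$, leaning on the basic algebraic identities $Ann_R(S)\supseteq Ann_R(M)$ for every $S\in Spec_G^s(M)$ (so $\phi$ is well defined) and the interplay between $Ann_M$ and $Ann_R$.

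For part (1), to show $\phi$ is continuous it suffices to check that the preimage of a basic (or arbitrary) closed set is closed. Every closed set in $Spec_G(\overline{R})$ has the form $V_G^{\overline{R}}(\overline{I})$ for a graded ideal $I$ of $R$ with $Ann_R(M)\subseteq I$. So the heart of part (1) is the identity $\phi^{-1}(V_G^{\overline{R}}(\overline{I}))=V_G^s(Ann_M(I))$, and continuity follows immediately since the right-hand side is closed in $Spec_G^s(M)$. First I would fix $S\in Spec_G^s(M)$ and compute: $S\in\phi^{-1}(V_G^{\overline{R}}(\overline{I}))$ iff $\overline{Ann_R(S)}\in V_G^{\overline{R}}(\overline{I})$ iff $\overline{I}\subseteq\overline{Ann_R(S)}$ iff $I\subseteq Ann_R(S)$ (both contain $Ann_R(M)$). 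Now I must match this with $S\in V_G^s(Ann_M(I))$, i.e. $Ann_R(Ann_M(I))\subseteq Ann_R(S)$. One direction is trivial since $I\subseteq Ann_R(Ann_M(I))$. For the reverse, from $I\subseteq Ann_R(S)$ one gets $Ann_M(Ann_R(S))\subseteq Ann_M(I)$, hence $Ann_R(Ann_M(I))\subseteq Ann_R(Ann_M(Ann_R(S)))=Ann_R(S)$, using that $S\subseteq Ann_M(Ann_R(S))$ and applying $Ann_R$. This closes part (1).

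For part (2), assuming $M$ is graded secondful, the goal is the formula $\phi(V_G^s(N))=V_G^{\overline{R}}(\overline{Ann_R(N)})$ for any $N\leq_G M$; closedness of $\phi$ then follows because every closed subset of $Spec_G^s(M)$ is of the form $V_G^s(N)$ (indeed, by Proposition \ref{Proposition 3.4}(3), $Cl(Y)=V_G^s(T(Y))$, so closed sets are exactly the $V_G^s(N)$). For the inclusion $\subseteq$: take $S\in V_G^s(N)$, so $Ann_R(N)\subseteq Ann_R(S)$, hence $\overline{Ann_R(N)}\subseteq\overline{Ann_R(S)}=\phi(S)$, giving $\phi(S)\in V_G^{\overline{R}}(\overline{Ann_R(N)})$ (note $Ann_R(N)\supseteq Ann_R(M)$ so $\overline{Ann_R(N)}$ makes sense). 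For the reverse inclusion $\supseteq$, which is where secondfulness is essential: take $\overline{p}\in V_G^{\overline{R}}(\overline{Ann_R(N)})$ with $p\in Spec_G(R)$, $Ann_R(M)\subseteq p$, and $Ann_R(N)\subseteq p$; since $\phi$ is surjective there is $S\in Spec_G^s(M)$ with $\phi(S)=\overline{p}$, i.e. $Ann_R(S)=p$; then $Ann_R(N)\subseteq Ann_R(S)$ shows $S\in V_G^s(N)$ and $\phi(S)=\overline{p}$, so $\overline{p}\in\phi(V_G^s(N))$.

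The main obstacle is mostly bookkeeping rather than a deep difficulty: one must be careful that all ideals in sight contain $Ann_R(M)$ so that passing to $\overline{R}=R/Ann_R(M)$ is legitimate and order-preserving, and one must correctly use the adjunction-type inequalities $I\subseteq Ann_R(Ann_M(I))$ and $S\subseteq Ann_M(Ann_R(S))$ together with the fact that for a graded second submodule $S$ the ideal $Ann_R(S)$ is graded prime. The only genuinely non-formal input is the surjectivity of $\phi$, used exactly once, in the $\supseteq$ direction of part (2); without it the image $\phi(V_G^s(N))$ could be a proper subset of $V_G^{\overline{R}}(\overline{Ann_R(N)})$ and $\phi$ need not be closed. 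Since the statement is quoted from \cite{salam2022zariski}, I would simply present this unwinding concisely and cite that reference for the details.
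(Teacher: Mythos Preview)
Your proof is correct. The paper itself gives no proof of this lemma at all---it simply records the statement and cites \cite[Proposition 3.1 and Theorem 3.7]{salam2022zariski}---so your direct unwinding of the definitions (using the Galois-connection identities $I\subseteq Ann_R(Ann_M(I))$, $Ann_R(Ann_M(Ann_R(S)))=Ann_R(S)$, and the surjectivity of $\phi$ for the $\supseteq$ inclusion in part~(2)) is exactly the argument one expects to find in that reference, and there is nothing further to compare.
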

Now, we need the following lemma to prove the next theorem.
\begin{lemma} \label{Lemma 4.4}
If $M$ is a non-zero $G$-graded secondful $R$-module, then $V_G^{\overline{R}}(\overline{Ann_R(Ann_M(I))})=V_G^{\overline{R}}(\overline{I})$ for every graded ideal $I$ of $R$ containing $Ann_R(M)$.
\end{lemma}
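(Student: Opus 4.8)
The plan is to reduce the statement to a radical-ideal identity in $\overline{R}$ by way of Lemma~\ref{Lemma 2.1}(3), which says that two graded ideals cut out the same closed subset of the graded spectrum precisely when they have the same graded radical. Concretely, $V_G^{\overline{R}}(\overline{Ann_R(Ann_M(I))})=V_G^{\overline{R}}(\overline{I})$ will follow once I show $Gr(\overline{Ann_R(Ann_M(I))})=Gr(\overline{I})$, equivalently (after pulling back along $R\to\overline{R}$ and using that $Ann_R(M)\subseteq I$ and $Ann_R(M)\subseteq Ann_R(Ann_M(I))$) that $Gr(Ann_R(Ann_M(I)))=Gr(I)$ as ideals of $R$.

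The main step is therefore to prove $Gr(Ann_R(Ann_M(I)))=Gr(I)$ for a nonzero graded secondful module $M$ and a graded ideal $I\supseteq Ann_R(M)$. One inclusion is immediate: $I\subseteq Ann_R(Ann_M(I))$ always holds, so $Gr(I)\subseteq Gr(Ann_R(Ann_M(I)))$. For the reverse inclusion I would argue at the level of graded prime ideals, exactly as in the proof of Proposition~\ref{Proposition 3.3}(1) but without assuming $I$ radical. Let $p\in Spec_G(R)$ with $I\subseteq p$; then $Ann_R(M)\subseteq I\subseteq p$, so by surjectivity of $\phi$ there is $S\in Spec_G^s(M)$ with $Ann_R(S)=p$. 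From $I\subseteq p=Ann_R(S)$ we get $S\subseteq Ann_M(Ann_R(S))\subseteq Ann_M(I)$, hence $Ann_R(Ann_M(I))\subseteq Ann_R(S)=p$. Thus $Ann_R(Ann_M(I))\subseteq p$ for every $p\in V_G^R(I)$, and by Lemma~\ref{Lemma 2.1}(2) this gives $Ann_R(Ann_M(I))\subseteq\xi(V_G^R(I))=Gr(I)$, whence $Gr(Ann_R(Ann_M(I)))\subseteq Gr(I)$.

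Combining the two inclusions yields $Gr(Ann_R(Ann_M(I)))=Gr(I)$ in $R$. Passing to $\overline{R}=R/Ann_R(M)$ (both ideals contain $Ann_R(M)$, and $Gr$ commutes with the quotient in the sense that $\overline{Gr(J)}=Gr(\overline{J})$ for $J\supseteq Ann_R(M)$) gives $Gr(\overline{Ann_R(Ann_M(I))})=Gr(\overline{I})$, and then Lemma~\ref{Lemma 2.1}(3), applied in the graded ring $\overline{R}$, delivers $V_G^{\overline{R}}(\overline{Ann_R(Ann_M(I))})=V_G^{\overline{R}}(\overline{I})$, as desired. Alternatively one can bypass radicals entirely and phrase the whole argument topologically: $V_G^{\overline{R}}(\overline{I})=\phi(\phi^{-1}(V_G^{\overline{R}}(\overline{I})))=\phi(V_G^s(Ann_M(I)))=V_G^{\overline{R}}(\overline{Ann_R(Ann_M(I))})$, where the first equality uses surjectivity of $\phi$ (graded secondfulness), the second uses Lemma~\ref{Lemma 4.3}(1), and the third uses Lemma~\ref{Lemma 4.3}(2). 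I expect the only subtlety is making sure the hypothesis $Ann_R(M)\subseteq I$ is genuinely used — it is needed so that $\overline{I}$ makes sense and so that $\phi^{-1}(V_G^{\overline{R}}(\overline{I}))=V_G^s(Ann_M(I))$ applies — and checking that $Ann_R(Ann_M(I))$ likewise contains $Ann_R(M)$, which is clear since $Ann_M(I)\subseteq M$.
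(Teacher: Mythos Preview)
Your proposal is correct. The alternative topological argument you give at the end is in fact the cleanest route: surjectivity of $\phi$ gives $\phi(\phi^{-1}(V_G^{\overline{R}}(\overline{I})))=V_G^{\overline{R}}(\overline{I})$, and then Lemma~\ref{Lemma 4.3}(1) and (2) finish the job in one line. The paper argues slightly differently: it first invokes Lemma~\ref{Lemma 3.5}(1) to replace $I$ by $Gr(I)$ at the level of $V_G^s(Ann_M(-))$, applies $\phi$ using Lemma~\ref{Lemma 4.3}(2), and then uses Proposition~\ref{Proposition 3.3}(1) (which needs a \emph{radical} ideal, hence the passage to $Gr(I)$) to identify $Ann_R(Ann_M(Gr(I)))$ with $Gr(I)$, concluding via $V_G^{\overline{R}}(\overline{Gr(I)})=V_G^{\overline{R}}(\overline{I})$. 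Your first approach is essentially a direct reworking of the proof of Proposition~\ref{Proposition 3.3}(1) that avoids the detour through $Gr(I)$ by proving $Gr(Ann_R(Ann_M(I)))=Gr(I)$ outright; your second approach bypasses Proposition~\ref{Proposition 3.3}(1) and Lemma~\ref{Lemma 3.5}(1) entirely and is shorter than the paper's proof. Both buy you a more self-contained argument, while the paper's route has the advantage of reusing already-established black boxes.
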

\begin{proof} By Lemma \ref{Lemma 3.5}(1), we have $V_G^s(Ann_M(Gr(I)))=V_G^s(Ann_M(I))$. Since $M$ is graded secondful, then $V_G^{\overline{R}}(\overline{Ann_R(Ann_M(Gr(I)))})=\phi(V_G^s(Ann_M(Gr(I))))=\phi(V_G^s(Ann_M(I)))=V_G^{\overline{R}}(\overline{Ann_R(Ann_M(I))})$. By Proposition \ref{Proposition 3.3}(1), we have $V_G^{\overline{R}}(\overline{Gr(I)})=V_G^{\overline{R}}(\overline{Ann_R(Ann_M(I))})$. But it is easy to see that $V_G^{\overline{R}}(\overline{Gr(I)})=V_G^{\overline{R}}(\overline{I})$. Consequently, $V_G^{\overline{R}}(\overline{I})=V_G^{\overline{R}}(\overline{Ann_R(Ann_M(I))})$.
\end{proof}
In Lemma \ref{Lemma 4.4}, if we drop the condition that $M$ is a graded secondful module, then the equality might not happen. For this, take the ring of integers $R=\mathbb{Z}$ as $\mathbb{Z}_2$-graded $\mathbb{Z}$-module by $R_0=\mathbb{Z}$ and $R_1=\{0\}$. Note that $Ann_{\mathbb{Z}}(\mathbb{Z})=\{0\}$. By \cite{salam2022zariski}, $Spec_{\mathbb{Z}_2}^s(\mathbb{Z})=\emptyset$. Now, it is clear that $\mathbb{Z}$ is not a $\mathbb{Z}_2$-graded secondful $\mathbb{Z}$-module. Let $I=2\mathbb{Z}$. Then $V_{\mathbb{Z}_2}^{\overline{\mathbb{Z}}}(\overline{Ann_{\mathbb{Z}}(Ann_{\mathbb{Z}}(I))})=\emptyset$. But $\overline{I}\in V_{\mathbb{Z}_2}^{\overline{\mathbb{Z}}}(\overline{I})$. So $V_{\mathbb{Z}_2}^{\overline{\mathbb{Z}}}(\overline{I})\neq V_{\mathbb{Z}_2}^{\overline{\mathbb{Z}}}(\overline{Ann_{\mathbb{Z}}(Ann_{\mathbb{Z}}(I))})$.
\begin{theorem}\label{Theorem 4.5}
Let $M$ be a $G$-graded secondful $R$-module. Then $M$ has Noetherian graded second spectrum if and only if $\overline{R}$ has Noetherian graded prime spectrum.
\end{theorem}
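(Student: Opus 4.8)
The plan is to transfer the Noetherian property across the natural map $\phi\colon Spec_G^s(M)\to Spec_G(\overline R)$ using Lemma \ref{Lemma 4.3}, together with the characterization of Noetherianity via the chain condition on graded Zariski socle submodules (Theorem \ref{Theorem 4.1}) and the chain condition on graded radical ideals (Proposition \ref{Proposition 2.2}(a)). Since $M$ is graded secondful, $\phi$ is a continuous closed surjection, and the key technical fact is that $\phi$ induces a bijection between the closed sets of $Spec_G^s(M)$ and the closed sets of $Spec_G(\overline R)$; this bijection will match up descending chains on both sides.

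For the forward direction, suppose $Spec_G^s(M)$ is Noetherian. Let $V_G^{\overline R}(\overline{I_1})\supseteq V_G^{\overline R}(\overline{I_2})\supseteq\cdots$ be a descending chain of closed sets in $Spec_G(\overline R)$, where each $I_j\lhd_G R$ contains $Ann_R(M)$. By Lemma \ref{Lemma 4.3}(1), $\phi^{-1}(V_G^{\overline R}(\overline{I_j}))=V_G^s(Ann_M(I_j))$, so pulling back gives a descending chain of closed sets in $Spec_G^s(M)$, which stabilizes by hypothesis; say $V_G^s(Ann_M(I_j))=V_G^s(Ann_M(I_k))$ for all $j\ge k$. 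Applying $\phi$ and using that $\phi$ is surjective (so $\phi\phi^{-1}$ is the identity on subsets of the image, which is everything here) together with Lemma \ref{Lemma 4.3}(2), we recover $V_G^{\overline R}(\overline{Ann_R(Ann_M(I_j))})=V_G^{\overline R}(\overline{Ann_R(Ann_M(I_k))})$ for $j\ge k$. By Lemma \ref{Lemma 4.4}, each $V_G^{\overline R}(\overline{Ann_R(Ann_M(I_j))})$ equals $V_G^{\overline R}(\overline{I_j})$, so the original chain stabilizes; hence $Spec_G(\overline R)$ is Noetherian. (If $M=0$ the statement is vacuous since then $\overline R$ is the zero ring and $Spec_G(\overline R)=\emptyset$.)

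For the converse, suppose $\overline R$ has Noetherian graded prime spectrum. Let $V_G^s(N_1)\supseteq V_G^s(N_2)\supseteq\cdots$ be a descending chain of closed sets in $Spec_G^s(M)$, with $N_j\leq_G M$. Since $\phi$ is closed (Lemma \ref{Lemma 4.3}(2)), $\phi(V_G^s(N_j))=V_G^{\overline R}(\overline{Ann_R(N_j)})$ gives a descending chain of closed sets in $Spec_G(\overline R)$, which stabilizes at some index $k$. Now I want to conclude that $V_G^s(N_j)=V_G^s(N_k)$ for $j\ge k$; the point is that $V_G^s(N)$ is determined by $Ann_R(N)$, indeed by $V_G^{\overline R}(\overline{Ann_R(N)})$. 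Concretely, by Lemma \ref{Lemma 3.5}(2), $V_G^s(N_j)=V_G^s(Ann_M(Ann_R(N_j)))$, and by Lemma \ref{Lemma 4.3}(1) this equals $\phi^{-1}(V_G^{\overline R}(\overline{Ann_R(N_j)}))$. Since the chain $V_G^{\overline R}(\overline{Ann_R(N_j)})$ is eventually constant, so is its preimage under $\phi$, i.e. $V_G^s(N_j)$ is eventually constant. Thus $Spec_G^s(M)$ is Noetherian.

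The main obstacle is making sure the back-and-forth between the two spaces is tight enough: $\phi$ is surjective but need not be injective, so one cannot naively say "closed sets correspond." The resolution is exactly the combination of Lemma \ref{Lemma 3.5}(2) (every $V_G^s(N)$ has the form $\phi^{-1}$ of a closed set in $Spec_G(\overline R)$) and Lemma \ref{Lemma 4.4} / Lemma \ref{Lemma 4.3}(2) (every closed set in $Spec_G(\overline R)$ has the form $\phi$ of a closed set in $Spec_G^s(M)$, with $\phi\phi^{-1}=\mathrm{id}$ on closed sets because $\phi$ is onto); together these show $\phi^{-1}$ and $\phi$ are mutually inverse bijections between the closed-set lattices, which immediately transports the descending chain condition in both directions. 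Once that is set up, the proof is a routine chase; I would write it in the two-direction format above, citing Theorem \ref{Theorem 4.1} and Proposition \ref{Proposition 2.2}(a) only if I prefer to phrase the chain conditions intrinsically rather than working directly with closed sets.
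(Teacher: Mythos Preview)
Your proposal is correct and follows essentially the same argument as the paper: pull back a descending chain along $\phi^{-1}$ via Lemma \ref{Lemma 4.3}(1), stabilize, then push forward via Lemma \ref{Lemma 4.3}(2) and invoke Lemma \ref{Lemma 4.4} for the forward direction; and conversely push forward via Lemma \ref{Lemma 4.3}(2), stabilize, then pull back using Lemma \ref{Lemma 3.5}(2) together with Lemma \ref{Lemma 4.3}(1). Your added remark that $\phi$ induces a bijection between the closed-set lattices is a helpful conceptual gloss, but the underlying steps are the same as the paper's; the references to Theorem \ref{Theorem 4.1} and Proposition \ref{Proposition 2.2}(a) in your plan are not actually needed since you (like the paper) work directly with chains of closed sets.
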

\begin{proof}If $M=0$, then the result is trivially true. So assume that $M\neq 0$. Suppose that $M$ has Noetherian graded second spectrum and let $V_G^{\overline{R}}(\overline{I_1})\supseteq V_G^{\overline{R}}(\overline{I_2})\supseteq...$ be a descending chain of closed sets in $Spec_G(\overline{R})$, where $I_t$ is a $G$-graded ideal of $R$ containing $Ann_R(M)$ for each $t$. Then $\phi^{-1}(V_G^{\overline{R}}(\overline{I_1}))\supseteq \phi^{-1}(V_G^{\overline{R}}(\overline{I_2}))\supseteq...$ and hence, by Lemma \ref{Lemma 4.3}(1), $V_G^s(Ann_M(I_1))\supseteq V_G^s(Ann_M(I_2))\supseteq ...$ which is a descending chain of closed sets in $Spec_G^s(M)$. Thus $\exists k\in \mathbb{Z}^{+}$ such that $V_G^s(Ann_M(I_k))=V_G^s(Ann_M(I_h))$ for each $h\geq k$ and thus $V_G^{\overline{R}}(\overline{Ann_R(Ann_M(I_k))})=\phi(V_G^s(Ann_M(I_k)))=\phi(V_G^s(Ann_M(I_h)))=V_G^{\overline{R}}(\overline{Ann_R(Ann_M(I_h))})$ by Lemma \ref{Lemma 4.3}(2). Now, using Lemma \ref{Lemma 4.4}, we get $V_G^{\overline{R}}(\overline{I_k})=V_G^{\overline{R}}(\overline{I_h})$ for each $h\geq k$ which completes the proof of the first direction. For the converse, suppose that $\overline{R}$ has Noetherian graded prime spectrum and let $V_G^s(N_1)\supseteq V_G^s(N_2)\supseteq ...$ be a descending chain of closed sets in $Spec_G^s(M)$, where $N_i\leq_G M$. So $\phi(V_G^s(N_1))\supseteq \phi(V_G^s(N_2))\supseteq...$ and so $V_G^{\overline{R}}(\overline{Ann_R(N_1)})\supseteq V_G^{\overline{R}}(\overline{Ann_R(N_2)})\supseteq ...$ is a descending chain of closed sets in $Spec_G(\overline{R})$ by Lemma \ref{Lemma 4.3}(2). This follows that $\exists k\in \mathbb{Z}^{+}$ such that $V_G^{\overline{R}}(\overline{Ann_R(N_i)})=V_G^{\overline{R}}(\overline{Ann_R(N_k)})$ for each $i\geq k$ and hence $\phi^{-1}(V_G^{\overline{R}}(\overline{Ann_R(N_i)}))=\phi^{-1}(V_G^{\overline{R}}(\overline{Ann_R(N_k)}))$. By Lemma \ref{Lemma 3.5}(2) and Lemma \ref{Lemma 4.3}(1), we have $V_G^s(N_i)=V_G^s(Ann_M(Ann_R(N_i)))=V_G^s(Ann_M(Ann_R(N_k)))=V_G^s(N_k)$ for each $i\geq k$.
\end{proof}
\begin{corollary} Let $M$ be a $G$-graded secondful $R$-module. Then we have the following:
\begin{enumerate}
\item[(i)] If $R$ has Noetherian graded prime spectrum, then $M$ has Noetherian graded second spectrum.
\item[(ii)] If $R$ is graded Noetherian ring, then $M$ has Noetherian graded second spectrum.
\end{enumerate}
\end{corollary}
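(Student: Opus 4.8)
The plan is to obtain both statements as immediate consequences of Theorem \ref{Theorem 4.5}, combined with the transfer results for Noetherian graded prime spectra from Section 2. The guiding observation is that $\overline{R}=R/Ann_R(M)$ is the quotient of $R$ by the \emph{graded} ideal $Ann_R(M)$ (recorded in the preliminaries), so any hypothesis forcing $Spec_G(R)$ to be Noetherian descends to $Spec_G(\overline{R})$, and then Theorem \ref{Theorem 4.5} does the rest, since $M$ is assumed graded secondful.

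For part (i), I would argue as follows. Suppose $R$ has Noetherian graded prime spectrum. Since $Ann_R(M)\lhd_G R$, Lemma \ref{Lemma 2.1}(4) yields that $R/Ann_R(M)=\overline{R}$ also has Noetherian graded prime spectrum. Because $M$ is a $G$-graded secondful $R$-module, Theorem \ref{Theorem 4.5} now applies and gives that $M$ has Noetherian graded second spectrum.

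For part (ii), suppose instead that $R$ is a graded Noetherian ring. By Proposition \ref{Proposition 2.2}(b), $Spec_G(R)$ is a Noetherian space, i.e.\ $R$ has Noetherian graded prime spectrum, so part (i) applies and $M$ has Noetherian graded second spectrum. (One could equally well note that the quotient $\overline{R}$ of a graded Noetherian ring is graded Noetherian, then invoke Proposition \ref{Proposition 2.2}(b) for $\overline{R}$ followed by Theorem \ref{Theorem 4.5}; both routes are routine.)

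I do not anticipate a genuine obstacle here: the entire substance is carried by the previously established Theorem \ref{Theorem 4.5}, Lemma \ref{Lemma 2.1}(4), and Proposition \ref{Proposition 2.2}(b). The only point meriting a line of care is the verification that $Ann_R(M)$ is a graded ideal of $R$, so that $\overline{R}$ is indeed a $G$-graded ring and the cited statements are literally applicable; this is exactly what is noted in the introductory section, so it can be cited rather than reproved.
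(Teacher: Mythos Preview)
Your proposal is correct and follows essentially the same route as the paper: part (i) is deduced from Lemma \ref{Lemma 2.1}(4) together with Theorem \ref{Theorem 4.5}, and part (ii) is reduced to part (i) via Proposition \ref{Proposition 2.2}(b). The paper's proof is just a terser version of what you wrote.
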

\begin{proof} (i) It is clear by Lemma \ref{Lemma 2.1}(4) and Theorem \ref{Theorem 4.5}.\\
(ii) Since $R$ is graded Noetherian ring, then it has Noetherian graded prime spectrum by Proposition \ref{Proposition 2.2}. Now, the result follows from part (i).
\end{proof}
In the following lemma, we give a property for the graded second socle of graded submodules which will be used in the proof of the next theorem.
\begin{lemma} \label{lemma 4.7}
Let $M$ be a $G$-graded $R$-module and $I_1, I_2\lhd_G R$. Then $soc_G(Ann_M(I_1 I_2))=soc_G(Ann_M(I_1))+soc_G(Ann_M(I_2))$.
\end{lemma}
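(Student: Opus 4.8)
The plan is to prove the two inclusions $soc_G(Ann_M(I_1I_2)) \supseteq soc_G(Ann_M(I_1)) + soc_G(Ann_M(I_2))$ and $soc_G(Ann_M(I_1I_2)) \subseteq soc_G(Ann_M(I_1)) + soc_G(Ann_M(I_2))$ by working at the level of the sets of graded second submodules, i.e.\ using $soc_G(-) = T(V_G^{s*}(-))$ from Proposition~\ref{Proposition 3.4}(1), rather than manipulating sums of submodules directly. Since $I_1I_2 \subseteq I_1$ and $I_1I_2 \subseteq I_2$, we have $Ann_M(I_1), Ann_M(I_2) \subseteq Ann_M(I_1I_2)$, so monotonicity of $soc_G$ gives the $\supseteq$ inclusion immediately.

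For the reverse inclusion, first I would observe that $Gr(I_1I_2) = Gr(I_1)\cap Gr(I_2)$ by \cite[Proposition 2.4(4)]{refai2000graded}. By Lemma~\ref{Lemma 3.5}(1), $V_G^{s*}(Ann_M(I_1I_2)) = V_G^{s*}(Ann_M(Gr(I_1I_2))) = V_G^{s*}(Ann_M(Gr(I_1)\cap Gr(I_2)))$. Now take any $S \in V_G^{s*}(Ann_M(I_1I_2))$; thus $S$ is graded second and $Gr(I_1I_2) = Gr(I_1)\cap Gr(I_2) \subseteq Ann_R(S)$. Since $Ann_R(S)$ is a graded prime ideal and $Gr(I_1)\cap Gr(I_2) \supseteq Gr(I_1)Gr(I_2) \supseteq I_1I_2$, primeness forces $Gr(I_1) \subseteq Ann_R(S)$ or $Gr(I_2) \subseteq Ann_R(S)$; hence $I_1 \subseteq Ann_R(S)$ or $I_2 \subseteq Ann_R(S)$, which means $S \subseteq Ann_M(I_1)$ or $S \subseteq Ann_M(I_2)$. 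Therefore $V_G^{s*}(Ann_M(I_1I_2)) = V_G^{s*}(Ann_M(I_1)) \cup V_G^{s*}(Ann_M(I_2))$, and applying $T(-)$ together with the identity $T(Y_1 \cup Y_2) = T(Y_1) + T(Y_2)$ yields $soc_G(Ann_M(I_1I_2)) = soc_G(Ann_M(I_1)) + soc_G(Ann_M(I_2))$, as desired.

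The only mild subtlety — and the step I would be most careful with — is the use of primeness of $Ann_R(S)$: one needs that a graded second submodule has a graded prime annihilator (stated in the preliminaries) and that a graded prime ideal containing a finite intersection of graded ideals contains one of them, which follows from \cite[Proposition 1.2]{refai2000graded} applied inductively (or directly, since $Gr(I_1)\cap Gr(I_2) \supseteq Gr(I_1)Gr(I_2)$). Everything else is bookkeeping with the set operators $V_G^{s*}$ and $T$ already established in Section~3, so no genuine obstacle is expected.
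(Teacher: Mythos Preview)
Your proof is correct and follows essentially the same route as the paper: the paper simply cites \cite[Corollary 2.14]{salam2022zariski} for the identity $V_G^{s*}(Ann_M(I_1 I_2))=V_G^{s*}(Ann_M(I_1))\cup V_G^{s*}(Ann_M(I_2))$ and then applies $T(-)$, whereas you reprove that identity inline via the primeness of $Ann_R(S)$. The detour through $Gr(I_1I_2)$ is unnecessary (primeness of $Ann_R(S)$ applied directly to $I_1 I_2\subseteq Ann_R(S)$ already gives $I_1\subseteq Ann_R(S)$ or $I_2\subseteq Ann_R(S)$), but it does no harm.
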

\begin{proof} By \cite[Corollary 2.14]{salam2022zariski}, we have $V_G^{s*}(Ann_M(I_1))\cup V_G^{s*}(Ann_M(I_2))=V_G^{s*}(Ann_M(I_1 I_2))$. This follows that $soc_G(Ann_M(I_1 I_2))=T(V_G^{s*}(Ann_M(I_1 I_2)))=T(V_G^{s*}(Ann_M(I_1))\cup V_G^{s*}(Ann_M(I_2)))=T(V_G^{s*}(Ann_M(I_1)))+T(V_G^{s*}(Ann_M(I_2)))=soc_G(Ann_M(I_1))+soc_G(Ann_M(I_2))$.
\end{proof}
A $G$-graded $R$-module $M$ is said to be a graded weak comultiplication module if $Spec_G^s(M)=\emptyset$ or any graded second submodule $S$ of $M$ has the form $S=Ann_M(I)$ for some graded ideal $I$ of $R$. It can be easily checked that a $G$-graded $R$-module $M$ is a graded weak comultiplication module if and only if $S=Ann_M(Ann_R(S))$ for each graded second submodule $S$ of $M$.
\begin{theorem} \label{theorem 4.8}
Let $R$ be a $G$-graded ring with Noetherian graded prime spectrum. If $M$ is a non-zero graded secondful weak comultiplication $R$-module, then the following hold:
\begin{enumerate}
\item Any graded Zariski socle submodule $N$ of $M$ is the sum of a finite number of graded seccond submoduels.
\item A graded submodule $N$ of $M$ is a graded Zariski socle submodule of $M$ if and only if $N=0$ or $N=\sum\limits_{i=1}^n Ann_M(p_i)$ for some $p_i\in V_G^R(Ann_R(N))$ and $n\in \mathbb{Z}^{+}$.
\end{enumerate}
\end{theorem}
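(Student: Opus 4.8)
\textbf{Proof proposal for Theorem \ref{theorem 4.8}.}
The plan is to reduce both statements to the finiteness result for graded radical ideals that follows from $R$ having Noetherian graded prime spectrum, namely Theorem \ref{Theorem 2.7}, and then transport that finiteness across the annihilator correspondence using the fact that $M$ is graded secondful and graded weak comultiplication. First I would prove (1). Let $N$ be a graded Zariski socle submodule, so $N = Z.soc_G(N)$. If $N = 0$ the claim is vacuous, so assume $N \neq 0$; then $Ann_R(N) \neq R$. Set $I = Gr(Ann_R(N))$, a graded radical ideal of $R$. By Theorem \ref{Theorem 2.7}, $I = p_1 \cap p_2 \cap \dots \cap p_n$ for finitely many minimal graded prime divisors $p_i$ of $I$; in particular each $p_i \in V_G^R(Ann_R(N))$, and since $M$ is graded secondful each $p_i = Ann_R(S_i)$ for some $S_i \in Spec_G^s(M)$, and by the weak comultiplication hypothesis $S_i = Ann_M(Ann_R(S_i)) = Ann_M(p_i)$. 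The key computation is then to identify $N$ with $\sum_{i=1}^n Ann_M(p_i)$. Using Proposition \ref{Proposition 3.6}(2), $Z.soc_G(N) = soc_G(Ann_M(Gr(Ann_R(N)))) = soc_G(Ann_M(I))$, and since $I = \bigcap p_i$ one writes $Ann_M(I) = Ann_M(\bigcap p_i)$; iterating Lemma \ref{lemma 4.7} (after replacing $\bigcap p_i$ by the product $p_1 p_2 \cdots p_n$, which has the same graded radical, so the same $soc_G(Ann_M(-))$ by Proposition \ref{Proposition 3.6}(1)) gives $soc_G(Ann_M(p_1 \cdots p_n)) = \sum_{i=1}^n soc_G(Ann_M(p_i))$. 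Finally each $soc_G(Ann_M(p_i))$ contains the graded second submodule $S_i = Ann_M(p_i)$ and is contained in $Ann_M(p_i)$, hence equals $Ann_M(p_i)$; therefore $N = Z.soc_G(N) = \sum_{i=1}^n Ann_M(p_i) = \sum_{i=1}^n S_i$, a finite sum of graded second submodules.

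For (2), the forward direction is exactly what part (1) delivered together with the case $N = 0$: a graded Zariski socle submodule is either $0$ or of the form $\sum_{i=1}^n Ann_M(p_i)$ with $p_i \in V_G^R(Ann_R(N))$. For the converse I would show that any submodule of the stated form is a graded Zariski socle submodule, i.e. satisfies $N = Z.soc_G(N)$. If $N = 0$ this is Proposition \ref{Proposition 3.8}(a). Otherwise write $N = \sum_{i=1}^n Ann_M(p_i)$ with $p_i \in V_G^R(Ann_R(N))$; each $p_i$ is a graded prime ideal, $M$ is graded secondful so $p_i = Ann_R(S_i)$ for some $S_i \in Spec_G^s(M)$, and weak comultiplication gives $Ann_M(p_i) = Ann_M(Ann_R(S_i)) = S_i$, so $N = \sum_{i=1}^n S_i$ is already a sum of graded second submodules. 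It remains to see $N = Z.soc_G(N)$. By Proposition \ref{Proposition 3.8}(d), $Z.soc_G(N) = \sum_{i=1}^n Z.soc_G(Ann_M(p_i))$; since $p_i \in Spec_G(R)$ we have $Gr(p_i) = p_i$, so Proposition \ref{Proposition 3.6}(1) gives $Z.soc_G(Ann_M(p_i)) = soc_G(Ann_M(p_i)) = Ann_M(p_i)$ (the last equality as in part (1), because $S_i = Ann_M(p_i)$ is a graded second submodule sitting between $soc_G(Ann_M(p_i))$ and $Ann_M(p_i)$). Summing over $i$ yields $Z.soc_G(N) = \sum_{i=1}^n Ann_M(p_i) = N$, so $N$ is a graded Zariski socle submodule.

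The main obstacle I anticipate is the bookkeeping in part (1) around passing between the intersection $\bigcap_{i=1}^n p_i$ produced by Theorem \ref{Theorem 2.7} and the finite product $p_1 \cdots p_n$ needed to apply Lemma \ref{lemma 4.7}: one must check $Gr(p_1 \cdots p_n) = Gr(\bigcap p_i) = \bigcap p_i$ (using \cite[Proposition 2.4(4)]{refai2000graded} and that each $p_i$ is radical) and then invoke Proposition \ref{Proposition 3.6}(1) to see that $Ann_M(-)$ composed with $soc_G$ does not distinguish an ideal from its graded radical, so the product and the intersection give the same graded second socle. The other point requiring a little care is the identification $soc_G(Ann_M(p_i)) = Ann_M(p_i)$: this needs that $Ann_M(p_i)$ is itself graded second, which is where the graded secondful plus weak comultiplication hypotheses are used together, since secondful supplies an $S_i$ with $Ann_R(S_i) = p_i$ and weak comultiplication forces $S_i = Ann_M(p_i)$. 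Once these two reductions are in place, everything else is a direct application of Propositions \ref{Proposition 3.6} and \ref{Proposition 3.8} and Lemma \ref{lemma 4.7}.
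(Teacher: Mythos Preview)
Your proposal is correct and follows essentially the same route as the paper's own proof: in (1) you reduce to Theorem \ref{Theorem 2.7}, write $Gr(Ann_R(N))=\bigcap_{i=1}^n p_i$, pass to the product via $Gr(\prod p_i)=\bigcap p_i$ in order to invoke Lemma \ref{lemma 4.7}, and then use secondful plus weak comultiplication to identify $soc_G(Ann_M(p_i))=Ann_M(p_i)=S_i$; for (2) you use Proposition \ref{Proposition 3.8}(d) together with Proposition \ref{Proposition 3.6}(1), exactly as the paper does. The bookkeeping points you flag (the intersection-to-product step and the equality $soc_G(Ann_M(p_i))=Ann_M(p_i)$) are precisely the ones the paper handles, in the same way.
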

\begin{proof} (1) Note that $\{0\}$ is the sum of the empty family of graded second submodules of $M$. Suppose that $N\neq 0$. Then $Ann_R(N)\neq R$ and hence $Gr(Ann_R(N))\neq R$. By Theorem \ref{Theorem 2.7}, we have $Gr(Ann_R(N))$ is the intersection of a finite number of minimal graded prime divisors of it. So $\exists n\in \mathbb{Z}^{+}$ and $p_1, p_2, ..., p_n$ minimal graded prime divisors of $Gr(Ann_R(N))$ such that $Gr(Ann_R(N))=\bigcap\limits_{i=1}^n p_i$. Since $M$ is graded secondful, then for each $i=1, ..., n$, there exists $S_i\in Spec_G^s(M)$ such that $p_i=Ann_R(S_i)$. As $M$ is graded weak comultiplication, then $S_i=Ann_M(Ann_R(S_i))=Ann_M(p_i)$ for each $i$. By Proposition \ref{Proposition 3.6}(2), \cite[Proposition 2.4]{refai2000graded} and Lemma \ref{lemma 4.7}, we obtain $N=Z.soc_G(N)=soc_G(Ann_M(Gr(Ann_R(N))))=soc_G(Ann_M(\bigcap\limits_{i=1}^n p_i))=soc_G(Ann_M(\bigcap\limits_{i=1}^n Gr(p_i)))=soc_G(Ann_M(Gr(p_1 p_2...p_n)))=soc_G(Ann_M(p_1 p_2...p_n))=\sum\limits_{i=1}^n soc_G(Ann_M(p_i))=\sum\limits_{i=1}^n S_i$.\\
(2) $\Rightarrow$: The proof is clear by part (1). \\
$\Leftarrow$: Suppose that $N=\sum\limits_{i=1}^n Ann_M(p_i)$ for some $p_i\in V_G^R(Ann_R(N))$ and $n\in \mathbb{Z}^{+}$. Note that for each $i=1,...,n$, there exists $S_i\in Spec_G^s(M)$ such that $p_i=Ann_R(S_i)$ as $M$ is a graded secondful module. Since $M$ is a graded weak comultiplication module, then $Ann_M(p_i)=Ann_M(Ann_R(S_i))=S_i\in Spec_G^s(M)$ for each $i$. This follows that $N=\sum\limits_{i=1}^n Ann_M(p_i)=\sum\limits_{i=1}^n soc_G(Ann_M(p_i))=\sum\limits_{i=1}^n Z.soc_G(Ann_M(p_i))=Z.soc_G(\sum\limits_{i=1}^n Ann_M(p_i))=Z.soc_G(N)$ by Proposition \ref{Proposition 3.6}(1) and Proposition \ref{Proposition 3.8}(d). Therefore $N=Z.soc_G(N)$, as needed.
\end{proof}
\begin{definition} A graded submodule $N$ of a $G$-graded $R$-module $M$ is said to be an $RFG_g^{*}$-submodule if $Z.soc_G(N)=Z.soc_G(Ann_M(I))$ for some graded finitely generated ideal $I$ of $R$. In addition, we say that $M$ has property $(RFG_g^{*})$ if every graded submodule of $M$ is an $RFG_g^{*}$-submodule.
\end{definition}
By Proposition \ref{Proposition 3.6}(2), every graded module over a graded Noetherian ring has property $(RFG_g^{*})$. \\
Let $M$ be a $G$-graded $R$-module. In \cite[Proposition 3.13 and Theorem 3.15]{salam2022zariski}, we have proved that for each $r\in h(R)$, the set $X_r^s=Spec_G^s(M)-V_G^s(Ann_M(r))$ is open in $Spec_G^s(M)$ and the family $\{X_r^s\,\mid\, r\in h(R)\}$ is a base for the Zariski topology on $Spec_G^s(M)$. In addition, if $M$ is a graded secondful module, then $X_r^s$ is compact for each $r\in h(R)$.
\begin{theorem} \label{theorem 2.10}
Let $M$ be a $G$-graded secondful $R$-module. Then $M$ has Noetherian graded second spectrum if and only if $M$ has property $(RFG_g^{*})$.
\end{theorem}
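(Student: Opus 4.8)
The plan is to mimic the structure of the proof of Theorem \ref{Theorem 2.11}, replacing the compactness of the basic opens $D_r$ in $Spec_G(R)$ by the compactness of the basic opens $X_r^s$ in $Spec_G^s(M)$, which is available precisely because $M$ is graded secondful. The bridge between ``$M$ has property $(RFG_g^*)$'' and ``every open set is compact'' is Lemma \ref{Lemma 2.10}, so the whole argument is the $Spec_G^s$-analogue of Theorem \ref{Theorem 2.11}.

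For the forward direction, assume $Spec_G^s(M)$ is Noetherian and let $N\leq_G M$. By Lemma \ref{Lemma 2.10} the open set $Spec_G^s(M)-V_G^s(N)$ is compact. Since $\{X_r^s\mid r\in h(R)\}$ is a base, $Spec_G^s(M)-V_G^s(N)=\bigcup_{i=1}^n X_{r_i}^s$ for some $r_1,\dots,r_n\in h(R)$. Taking complements, $V_G^s(N)=\bigcap_{i=1}^n V_G^s(Ann_M(r_i))$. Here one uses the identity $V_G^s(Ann_M(I))\cap V_G^s(Ann_M(J))=V_G^s(Ann_M(IJ))$ (the $V_G^s$-form of \cite[Corollary 2.14]{salam2022zariski}, equivalently Lemma \ref{lemma 4.7} read through $Z.soc_G=soc_G$ on the submodules $Ann_M(\cdot)$ via Proposition \ref{Proposition 3.6}(1)), together with $Ann_M(r)=Ann_M(Rr)$, to rewrite $\bigcap_{i=1}^n V_G^s(Ann_M(r_i))=V_G^s(Ann_M(\prod_{i=1}^n Rr_i))=V_G^s(Ann_M(I))$ where $I=\sum_{i=1}^n Rr_i$ (note $Gr(\prod Rr_i)=Gr(\bigcap Rr_i)=\bigcap Gr(Rr_i)$ and $Gr(\sum Rr_i)$ agree after applying $Gr$, and $V_G^s(Ann_M(\cdot))$ only sees the graded radical by Lemma \ref{Lemma 3.5}(1)). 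Thus $V_G^s(N)=V_G^s(Ann_M(I))$ with $I$ graded finitely generated, and Proposition \ref{Proposition 3.6}(5) gives $Z.soc_G(N)=Z.soc_G(Ann_M(I))$, so $N$ is an $RFG_g^*$-submodule; hence $M$ has property $(RFG_g^*)$.

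For the converse, assume $M$ has property $(RFG_g^*)$ and let $U=Spec_G^s(M)-V_G^s(K)$ be an arbitrary open set, $K\leq_G M$. By Lemma \ref{Lemma 2.10} it suffices to show $U$ is compact. By hypothesis $Z.soc_G(K)=Z.soc_G(Ann_M(I))$ for some graded finitely generated ideal $I=\sum_{i=1}^m Rx_i$, $x_i\in h(R)$. By Proposition \ref{Proposition 3.6}(5), $V_G^s(K)=V_G^s(Ann_M(I))$, and as above $V_G^s(Ann_M(I))=\bigcap_{i=1}^m V_G^s(Ann_M(x_i))$. Hence $U=Spec_G^s(M)-\bigcap_{i=1}^m V_G^s(Ann_M(x_i))=\bigcup_{i=1}^m X_{x_i}^s$ is a finite union of compact sets (each $X_{x_i}^s$ is compact since $M$ is graded secondful), so $U$ is compact, as needed.

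The main obstacle is the bookkeeping in the step $V_G^s(Ann_M(\sum_{i=1}^n Rr_i))=\bigcap_{i=1}^n V_G^s(Ann_M(Rr_i))$: one must check that the finite-intersection/finite-sum identity for $V_G^s\circ Ann_M$ holds, and that passing between $\prod Rr_i$, $\bigcap Rr_i$ and $\sum Rr_i$ is harmless because $V_G^s(Ann_M(-))$ depends only on the graded radical of the ideal (Lemma \ref{Lemma 3.5}(1)) and $Gr$ turns finite products into finite intersections. All of this is assembled from results already in the excerpt (\cite[Corollary 2.14, Proposition 3.13, Theorem 3.15]{salam2022zariski}, Proposition \ref{Proposition 3.6}, Lemma \ref{Lemma 3.5}, Lemma \ref{Lemma 2.10}), so no genuinely new idea beyond the secondful-compactness input is required; the proof is essentially a transcription of Theorem \ref{Theorem 2.11}.
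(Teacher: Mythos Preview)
Your overall strategy is exactly the paper's: use Lemma \ref{Lemma 2.10} in both directions, the basis $\{X_r^s\}$, and the compactness of each $X_r^s$ (valid because $M$ is graded secondful). The structure and the concluding steps via Proposition \ref{Proposition 3.6}(5) are correct.

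However, your justification of the key identity
\[
\bigcap_{i=1}^n V_G^s(Ann_M(r_i))=V_G^s\Bigl(Ann_M\bigl(\textstyle\sum_{i=1}^n Rr_i\bigr)\Bigr)
\]
is wrong in two places. First, \cite[Corollary 2.14]{salam2022zariski} (equivalently Lemma \ref{lemma 4.7}) gives a \emph{union}, not an intersection: $V_G^{s*}(Ann_M(I))\cup V_G^{s*}(Ann_M(J))=V_G^{s*}(Ann_M(IJ))$. So your displayed equality $V_G^s(Ann_M(I))\cap V_G^s(Ann_M(J))=V_G^s(Ann_M(IJ))$ is false as stated. Second, your claim that $Gr(\prod Rr_i)$ and $Gr(\sum Rr_i)$ ``agree after applying $Gr$'' is simply false: $Gr(\prod Rr_i)=\bigcap Gr(Rr_i)$ is in general strictly smaller than $Gr(\sum Rr_i)$ (take $r_1,r_2$ coprime). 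So the passage $V_G^s(Ann_M(\prod Rr_i))=V_G^s(Ann_M(\sum Rr_i))$ via Lemma \ref{Lemma 3.5}(1) does not go through.

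The identity you need is nonetheless true, and the paper obtains it differently: it invokes \cite[Theorem 2.16(2)]{salam2022zariski} to write $\bigcap_i V_G^s(Ann_M(r_i))=V_G^s\bigl(\bigcap_i Ann_M(Ann_R(Ann_M(r_i)))\bigr)=V_G^s\bigl(\bigcap_i Ann_M(r_i)\bigr)$, and then uses the elementary equality $\bigcap_i Ann_M(r_i)=Ann_M(\sum_i Rr_i)$. Equivalently, one can argue directly that $S\in V_G^{s*}(Ann_M(Rr_i))$ for all $i$ iff $r_iS=0$ for all $i$ iff $(\sum_i Rr_i)S=0$ iff $S\in V_G^{s*}(Ann_M(\sum_i Rr_i))$, and then apply Lemma \ref{Lemma 3.5}(1). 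Replace your product/radical argument by either of these and the proof is complete and coincides with the paper's.
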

\begin{proof}
Suppose that $M$ has Noetherian graded second spectrum and let $N\leq_G M$. By Lemma \ref{Lemma 2.10}, the open set $Spec_G^s(M)-V_G^s(N)$ is compact. Since $\{X_r^s\,\mid\, r\in h(R)\}$ is a base for the Zariski topology on $Spec_G^s(M)$, then $Spec_G^s(M)-V_G^s(N)=\bigcup\limits_{i=1}^n X_{r_i}^s$ for some $r_1,r_2, ..., r_n\in h(R)$. Using \cite[Theorem 2.16(2)]{salam2022zariski}, $V_G^s(M)=Spec_G^s(M)-\bigcup\limits_{i=1}^n X_{r_i}^s=\bigcap\limits_{i=1}^n V_G^s(Ann_M(r_i))=V_G^s(\bigcap\limits_{i=1}^n Ann_M(Ann_R(Ann_M(r_i))))=V_G^s(\bigcap\limits_{i=1}^n Ann_M(r_i))=V_G^s(Ann_M(\sum\limits_{i=1}^n Rr_i))$. By Proposition \ref{Proposition 3.6}(5), we obtain $Z.soc_G(N)=Z.soc_G(Ann_M(\sum\limits_{i=1}^n Rr_i))$ which follows that $N$ is an $RFG_g^{*}$-submodule. Consequently, $M$ has property $(RFG_g^{*})$. Conversely, assume that $M$ has property $(RFG_g^{*})$ and let $U=Spec_G^s(M)-V_G^s(T)$ be any open set in $Spec_G^s(M)$, where $N\leq_G M$. By Lemma \ref{Lemma 2.10}, it is enough to show that $U$ is compact. Remark that $Z.soc_G(T)=Z.soc_G(Ann_M(\sum\limits_{i=1}^k Rr_i))$ for some $r_1, r_2, ..., r_k\in h(R)$. Again, by Proposition \ref{Proposition 3.6}(5) and \cite[Theorem 2.16(2)]{salam2022zariski}, we have $V_G^s(T)=\bigcap\limits_{i=1}^k V_G^s(Ann_M(r_i))$. Hence $U=\bigcup\limits_{i=1}^k X_{r_i}^s$ is a finite union of compact sets in $Spec_G^s(M)$ and thus $U$ is compact, as desired.
\end{proof}
A $G$-graded $R$-module $M$ is said to be graded faithful if whenever $r\in h(R)$ with $rM=\{0_M\}$ implies that $r=0$. In other words, $M$ is a $G$-graded faithful $R$-module if the set $Ann_{h(R)}(M)=\{r\in h(R)\,\mid\, rM=\{0_M\}\}=\{0_R\}$, see \cite{oral2011graded}. Let $M$ be a $G$-graded $R$-module and $I\lhd_G R$. It is easy to see that if $Ann_{h(R)}(M)\subseteq I$, then $Ann_R(M)\subseteq I$.
\begin{lemma} \label{lemma 4.11}
Let $M$ be a $G$-graded seconful faithful $R$-module and $N\leq_G M$. Then $N$ is an $RFG_g^{*}$-submodule of $M$ if and only if $Ann_R(N)$ is an $RFG_g$-ideal.
\end{lemma}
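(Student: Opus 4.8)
The plan is to prove the two implications of the biconditional separately, using the characterization of the graded Zariski socle in terms of annihilators (Proposition \ref{Proposition 3.6}(2), which says $Z.soc_G(N)=soc_G(Ann_M(Gr(Ann_R(N))))$, and Lemma \ref{Lemma 3.5}), together with the fact that $M$ is graded secondful, so that the surjectivity of $\phi$ lets us pass freely between $V_G^s$ of submodules and $V_G^{\overline R}$ of ideals. Since $M$ is graded faithful we have $Ann_R(M)=\{0\}$, hence $\overline R=R$ and $\overline{I}=I$ for every graded ideal $I$; this removes the bar notation from all the statements we need to invoke and, in particular, means every graded ideal of $R$ automatically contains $Ann_R(M)$, so Lemma \ref{Lemma 4.3} and Lemma \ref{Lemma 4.4} apply to every graded ideal without side conditions.

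First I would prove the forward direction: assume $N$ is an $RFG_g^{*}$-submodule, so $Z.soc_G(N)=Z.soc_G(Ann_M(I))$ for some graded finitely generated ideal $I$ of $R$. By Proposition \ref{Proposition 3.6}(5) this is equivalent to $V_G^s(N)=V_G^s(Ann_M(I))$. Apply the closed map $\phi$ (Lemma \ref{Lemma 4.3}(2)) to both sides: $\phi(V_G^s(N))=V_G^R(Ann_R(N))$ while $\phi(V_G^s(Ann_M(I)))=V_G^R(Ann_R(Ann_M(I)))$, and by Lemma \ref{Lemma 4.4} the latter equals $V_G^R(I)$. Hence $V_G^R(Ann_R(N))=V_G^R(I)$, and by Lemma \ref{Lemma 2.1}(3) this gives $Gr(Ann_R(N))=Gr(I)$ with $I$ graded finitely generated, i.e. $Ann_R(N)$ is an $RFG_g$-ideal.

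For the converse, assume $Ann_R(N)$ is an $RFG_g$-ideal, so $Gr(Ann_R(N))=Gr(I)$ for some graded finitely generated ideal $I$ of $R$. By Lemma \ref{Lemma 2.1}(3) this yields $V_G^R(Ann_R(N))=V_G^R(I)$. Pulling back along $\phi$ (Lemma \ref{Lemma 4.3}(1)) and using Lemma \ref{Lemma 3.5}(2), we get $V_G^s(N)=V_G^s(Ann_M(Ann_R(N)))=\phi^{-1}(V_G^R(Ann_R(N)))=\phi^{-1}(V_G^R(I))=V_G^s(Ann_M(I))$. By Proposition \ref{Proposition 3.6}(5) this means $Z.soc_G(N)=Z.soc_G(Ann_M(I))$, so $N$ is an $RFG_g^{*}$-submodule. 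The only real subtlety is keeping the secondful hypothesis in view at the two places it is used — the closedness of $\phi$ in the forward direction (Lemma \ref{Lemma 4.3}(2) and, through it, Lemma \ref{Lemma 4.4}) and nothing beyond continuity in the converse — and checking that faithfulness genuinely reduces everything to the unbarred setting; neither of these is a serious obstacle, so I expect the proof to be short and essentially a diagram chase through the already-established correspondences.
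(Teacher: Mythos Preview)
Your proof is correct. The converse direction is essentially the paper's argument, just phrased via $\phi^{-1}$ instead of quoting Proposition~\ref{Proposition 3.6}(2) and (1) directly; these are the same computation. In the forward direction, however, you take a different route from the paper. The paper works purely with annihilator--radical identities: it applies Proposition~\ref{Proposition 3.8}(f) (namely $Gr(Ann_R(N))=Ann_R(Z.soc_G(N))$ for secondful $M$) to both sides of $Z.soc_G(N)=Z.soc_G(Ann_M(I))$, then invokes Proposition~\ref{Proposition 3.3}(1) to simplify $Ann_R(Ann_M(Gr(I)))$ to $Gr(I)$. You instead translate the equality $Z.soc_G(N)=Z.soc_G(Ann_M(I))$ into $V_G^s(N)=V_G^s(Ann_M(I))$, push it forward through the closed map $\phi$ (Lemma~\ref{Lemma 4.3}(2)), use Lemma~\ref{Lemma 4.4} to identify the image, and then read off $Gr(Ann_R(N))=Gr(I)$ via Lemma~\ref{Lemma 2.1}(3). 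Since Lemma~\ref{Lemma 4.4} is itself proved from Proposition~\ref{Proposition 3.3}(1), the two arguments rest on the same foundations; your version makes the ``diagram chase'' through the spectra explicit, while the paper's stays on the algebraic side. One minor remark: your assertion that graded faithfulness gives $Ann_R(M)=\{0\}$ is correct (because $Ann_R(M)$ is graded and hence determined by its homogeneous components, all of which lie in $Ann_{h(R)}(M)=\{0\}$), but the paper only uses the weaker observation $Ann_R(M)\subseteq I$, which is all that is needed.
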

\begin{proof} $\Rightarrow$: Suppose that $N$ is an $RFG_g^{*}$-submodule. Then $Z.soc_G(N)=Z.soc_G(Ann_M(I))$ for some graded finitely generated ideal $I$ of $R$. Since $M$ is graded secondful, by Proposition \ref{Proposition 3.8}(f) and Proposition \ref{Proposition 3.6}(1), we have $Gr(Ann_R(N))=Ann_R(Z.soc_G(N))=Ann_R(Z.soc_G(Ann_M(I)))=Ann_R(Z.soc_G(Ann_M(Gr(I))))=Gr(Ann_R(Ann_M(Gr(I))))$. Since $M$ is a graded faithful module, then $Ann_{h(R)}(M)=\{0\}\subseteq I$ and hence $Ann_R(M)\subseteq I\subseteq Gr(I)$. By Proposition \ref{Proposition 3.3}(1), we get $Ann_R(Ann_M(Gr(I)))=Gr(I)$ and thus $Gr(Ann_R(N))=Gr(Gr(I))=Gr(I)$. Therefore $Ann_R(N)$ is an $RFG_g$-ideal of $R$.\\
$\Leftarrow$: Suppose that $Ann_R(N)$ is an $RFG_g$-ideal of $R$. Then $Gr(Ann_R(N))=Gr(J)$ for some graded finitely generated ideal $J$ of $R$. By Proposition \ref{Proposition 3.6}, we obtain $Z.soc_G(N)=Z.soc_G(Ann_M(Gr(Ann_R(N))))=Z.soc_G(Ann_M(Gr(J)))=Z.soc_G(Ann_M(J))$ and thus $N$ is an $RFG_g^{*}$-submodule.
\end{proof}
\begin{corollary} \label{corollary 4.12}
Let $M$ be a $G$-graded secondful faithful $R$-module. Then the following hold:
\begin{enumerate}
\item $M$ has Noetherian graded second spectrum if and only if every graded second submodule of $M$ is an $RFG_g^{*}$-submodule.
\item If $N_1$ and $N_2$ are $RFG_g^{*}$-submodules of $M$, then so is $N_1+ N_2$.
\end{enumerate}
\end{corollary}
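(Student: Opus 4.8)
The plan is to deduce both parts of Corollary~\ref{corollary 4.12} from the machinery already assembled, with Theorem~\ref{theorem 2.10} and Lemma~\ref{lemma 4.11} doing most of the work. For part~(1), the forward direction is immediate: if $M$ has Noetherian graded second spectrum, then by Theorem~\ref{theorem 2.10} $M$ has property $(RFG_g^{*})$, so in particular \emph{every} graded submodule---and a fortiori every graded second submodule---is an $RFG_g^{*}$-submodule. The substance is the converse. So suppose every graded second submodule of $M$ is an $RFG_g^{*}$-submodule; I want to conclude that $\overline{R}$ has Noetherian graded prime spectrum and then invoke Theorem~\ref{Theorem 4.5}. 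Since $M$ is graded faithful, $Ann_R(M)=\{0\}$ (because $Ann_{h(R)}(M)=\{0\}$ forces $Ann_R(M)=\{0\}$ as noted before Lemma~\ref{lemma 4.11}), hence $\overline{R}=R$, and it suffices to show $R$ has Noetherian graded prime spectrum. By Corollary~\ref{Corollary 2.13}, this reduces to checking that every graded prime ideal $p$ of $R$ is an $RFG_g$-ideal.

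The key step is therefore: given $p\in Spec_G(R)$, produce a graded second submodule whose annihilator realizes $p$, and then transfer the $RFG_g^{*}$ property to an $RFG_g$ property of $p$ via Lemma~\ref{lemma 4.11}. Since $M$ is graded secondful, the natural map $\phi$ is surjective onto $Spec_G(\overline{R})=Spec_G(R)$, so there exists $S\in Spec_G^s(M)$ with $Ann_R(S)=p$. By hypothesis $S$ is an $RFG_g^{*}$-submodule, and then Lemma~\ref{lemma 4.11} (applicable since $M$ is graded secondful and faithful) tells us that $Ann_R(S)=p$ is an $RFG_g$-ideal of $R$. As $p$ was an arbitrary graded prime ideal, Corollary~\ref{Corollary 2.13} gives that $R=\overline{R}$ has Noetherian graded prime spectrum, and Theorem~\ref{Theorem 4.5} finishes the implication. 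I expect the only mild subtlety is bookkeeping the identification $\overline{R}=R$ and confirming the hypotheses of Lemma~\ref{lemma 4.11} and Theorem~\ref{Theorem 4.5} are in force; no new computation is needed.

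For part~(2), the plan is a short direct argument using the additivity of the graded Zariski socle. Suppose $N_1$ and $N_2$ are $RFG_g^{*}$-submodules, so $Z.soc_G(N_i)=Z.soc_G(Ann_M(I_i))$ for graded finitely generated ideals $I_1,I_2$ of $R$. By Proposition~\ref{Proposition 3.8}(d), $Z.soc_G(N_1+N_2)=Z.soc_G(N_1)+Z.soc_G(N_2)=Z.soc_G(Ann_M(I_1))+Z.soc_G(Ann_M(I_2))$; applying Proposition~\ref{Proposition 3.8}(d) once more to the right-hand side (or combining with the identity $Ann_M(I_1)+Ann_M(I_2)\supseteq\cdots$ more carefully) I want to rewrite this as $Z.soc_G\big(Ann_M(I_1)+Ann_M(I_2)\big)$. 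The remaining point is to recognize $Ann_M(I_1)+Ann_M(I_2)$ as $Ann_M$ of a graded finitely generated ideal: here $Ann_M(I_1)+Ann_M(I_2)\supseteq Ann_M(I_1\cap I_2)$ is not quite an equality in general, so instead I would use Lemma~\ref{lemma 4.7} together with Proposition~\ref{Proposition 3.6}(3) (or directly the relation $V_G^{s*}(Ann_M(I_1))\cup V_G^{s*}(Ann_M(I_2))=V_G^{s*}(Ann_M(I_1I_2))$) to get $Z.soc_G(N_1+N_2)=soc_G(Ann_M(I_1))+soc_G(Ann_M(I_2))=soc_G(Ann_M(I_1I_2))=Z.soc_G(Ann_M(I_1I_2))$, where $I_1I_2$ is again graded finitely generated. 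This exhibits $N_1+N_2$ as an $RFG_g^{*}$-submodule. The one place to be careful is the interplay between $soc_G$ and $Z.soc_G$ in these identities; invoking Proposition~\ref{Proposition 3.6}(1) keeps everything legitimate since each term is of the form $(Z.)soc_G(Ann_M(\cdot))$.
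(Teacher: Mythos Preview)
Your argument for part~(1) is correct and essentially identical to the paper's, modulo one cosmetic difference: you observe directly that $Ann_R(M)=\{0\}$ (so $\overline{R}=R$), whereas the paper only records $Ann_R(M)\subseteq p$ for each graded prime $p$ and then passes from $R$ to $\overline{R}$ via Lemma~\ref{Lemma 2.1}(4). Both are fine.

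For part~(2) your approach is genuinely different from the paper's, and in fact more elementary. The paper translates everything to the ring side via Lemma~\ref{lemma 4.11}: it shows $Ann_R(N_1)$ and $Ann_R(N_2)$ are $RFG_g$-ideals, observes $Ann_R(N_1+N_2)=Ann_R(N_1)\cap Ann_R(N_2)$, invokes Proposition~\ref{proposition 2.9}(1) to conclude this intersection is again $RFG_g$, and then translates back with Lemma~\ref{lemma 4.11}. Your route stays on the module side: using Proposition~\ref{Proposition 3.8}(d), Proposition~\ref{Proposition 3.6}(1), and Lemma~\ref{lemma 4.7} you get
\[
Z.soc_G(N_1+N_2)=soc_G(Ann_M(I_1))+soc_G(Ann_M(I_2))=soc_G(Ann_M(I_1I_2))=Z.soc_G(Ann_M(I_1I_2)),
\]
with $I_1I_2$ graded finitely generated. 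This is correct, and notably it never uses the secondful or faithful hypotheses (nor Lemma~\ref{lemma 4.11} or Proposition~\ref{proposition 2.9}), so your proof of~(2) actually establishes the statement for an arbitrary $G$-graded $R$-module $M$. The paper's route, by contrast, genuinely needs both hypotheses to invoke Lemma~\ref{lemma 4.11} in each direction. What the paper's approach buys is a cleaner conceptual parallel with Proposition~\ref{proposition 2.9}(1); what yours buys is greater generality and avoidance of the back-and-forth translation.
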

\begin{proof} (1) $\Rightarrow$: It is obvious by Theorem \ref{theorem 2.10}.\\
$\Leftarrow$: Suppose that every graded second submodule of $M$ is an $RFG_g^{*}$-submodule and let $p\in Spec_G(R)$. Since $M$ is graded faithful, then $Ann_{h(R)}(M)=\{0\}\subseteq p$ and hence $Ann_R(M)\subseteq p$. As $M$ is graded secondful, then $\exists S\in Spec_G^s(M)$ such that $Ann_R(S)=p$. So $S$ is an $RFG_g^{*}$-submodule and so $p$ is an $RFG_g$-ideal of $R$ by Lemma \ref{lemma 4.11}. This means that every graded prime ideal of $R$ is an $RFG_g$-ideal. Thus $R$ has Noetherian graded prime spectrum by Corollary \ref{Corollary 2.13}. This implies that $\overline{R}$ has Noetherian graded prime spectrum using Lemma \ref{Lemma 2.1}(4). By Theorem \ref{Theorem 4.5}, we obtain $M$ has Noetherian graded second spectrum, as needed.\\
(2) By Lemma \ref{lemma 4.11}, $Ann_R(N_1)$ and $Ann_R(N_2)$ are $RFG_g$-ideals of $R$. But $Ann_R(N_1+N_2)=Ann_R(N_1)\cap Ann_R(N_2)$. Using Proposition \ref{proposition 2.9}(1), we have $Ann_R(N_1+N_2)$ is an $RFG_g$-ideal of $R$. Again, by Lemma \ref{lemma 4.11}, we obtain $N_1+N_2$ is an $RFG_g^{*}$-submodule of $M$.
\end{proof}

\bigskip\bigskip\bigskip\bigskip

\end{document}